\newcommand{\twiddle}{\raisebox{1pt}{\scalebox{.75}{$\mathord{\sim}$}}}
\font\logic=msam10 at 12pt
\newcommand{\forces}{\mathrel{\mbox{\logic\char'015}}}
\newcommand{\pmax}{\mathbb{P}_{\mathrm{max}}}
\newcommand{\Pmax}{\pmax}
\newcommand{\res}{\mathop{|}}
\newcommand{\rst}{\restriction}
\newcommand{\idm}{\mathsf{id}}
\newcommand{\Lp}{\mathsf{Lp}}
\newcommand{\otp}{\mathsf{otp}}
\newcommand{\eS}{\mathcal{S}}
\newcommand{\rng}{\mathrm{rng}}
\newcommand{\Le}{\mathrm{L}[E]}
\newcommand{\ult}{\mathsf{Ult}}
\newcommand{\ADR}{\mathsf{AD}_{\mathbb{R}}}
\newcommand{\HOD}{\mathrm{HOD}}
\newcommand{\Unif}{\mathsf{Uniformization}}
\newcommand{\hod}{\HOD}
\newcommand{\V}{\mathrm{V}}
\newcommand{\mbfV}{\V}
\newcommand{\PFA}{\mathsf{PFA}}
\newcommand{\ZF}{\mathsf{ZF}}
\newcommand{\MM}{\mathsf{MM}}
\newcommand{\AD}{\mathsf{AD}}
\newcommand{\DC}{\mathsf{DC}}
\newcommand{\SRP}{\mathsf{SRP}}
\newcommand{\Add}{\mathrm{Add}}
\DeclareMathOperator{\cof}{cf}
\DeclareMathOperator{\cf}{\cof}
\newcommand{\R}{\mathbb R}
\DeclareMathOperator{\dom}{dom}
\theoremstyle{definition}
\newtheorem{definition}{Definition}[section]
\newtheorem{question}[definition]{Question}
\newtheorem{remark}[definition]{Remark}
\theoremstyle{plain}
\newtheorem{theorem}[definition]{Theorem}
\newtheorem*{theorem*}{Theorem}
\newtheorem{lemma}[definition]{Lemma}
\newtheorem{corollary}[definition]{Corollary}
\newtheorem{claim}[definition]{Claim}
\newtheorem{subclaim}[definition]{Subclaim}
\newtheorem*{claim*}{Claim}
\begin{document}


\begin{abstract}
By forcing with $\pmax$ over strong models of determinacy, we obtain models where different square
principles at $\omega_2$ and $\omega_3$ fail. In particular, we obtain a model of
$2^{\aleph_0}=2^{\aleph_1}=\aleph_2+\lnot\square(\omega_2)+\lnot\square(\omega_3)$.
\end{abstract}

\author[Caicedo]{Andr\'{e}s Eduardo Caicedo$^*{}^\dagger$}
\address{
  Andr\'es Eduardo Caicedo \\
  Boise State University \\
  Department of Mathematics \\
  1910 University Drive \\
  Boise, ID 83725-1555 \\
  USA
}
\email{caicedo@math.boisestate.edu}
\urladdr{http://math.boisestate.edu/{\twiddle}caicedo}
\curraddr{Mathematical Reviews \\
  416 Fourth Street \\ 
  Ann Arbor, MI 48103-4820 \\ 
  USA
}
\email{aec@ams.org}
\urladdr{http://www-personal.umich.edu/{\twiddle}caicedo/}
\thanks{$^*$Supported in part by AIM through a SQuaREs project.}
\thanks{$\dagger$The first author was also supported in part by NSF Grant
  DMS-0801189.}

\author[Larson]{Paul Larson$^*{}^\ddagger$}
\address{
  Paul Larson \\
  Department of Mathematics \\
  Miami University \\
  Oxford, OH 45056 \\
  USA
}
\email{larsonpb@miamioh.edu}
\thanks{$\ddagger$The second author was also supported in part by NSF Grants
  DMS-0801009 and DMS-1201494.}
\urladdr{http://www.users.miamioh.edu/larsonpb/}

\author[Sargsyan]{Grigor Sargsyan$^*{}^\star$}
\address{
  Grigor Sargsyan \\
  Department of Mathematics \\
  Rutgers University \\
  Hill Center for the Mathematical Sciences, 110 Frelinghuysen Rd. \\
  Piscataway, NJ 08854-8019 \\
  USA
}
\email{grigor@math.rutgers.edu}
\thanks{$\star$The third author was also supported in part by NSF Grants DMS-0902628, DMS-1201348 and DMS-1352034.}
\urladdr{http://grigorsargis.weebly.com/}

\author[Schindler]{Ralf Schindler$^*$}
\address{
  Ralf Schindler \\
  Institut f\"{u}r mathematische Logik und Grund\-lagen\-for\-sch\-ung \\
  Fachbereich Mathematik und Informatik \\
  Universit\"{a}t M\"{u}nster \\
  Einsteinstra{\ss}e 62 \\
  48149 M\"{u}nster \\
  Germany
}
\email{rds@math.uni-muenster.de}
\thanks{The fourth author gratefully acknowledges support from the SFB 878
of the Deutsche Forschungsgemeinschaft (DFG)}
\urladdr{http://wwwmath.uni-muenster.de/logik/Personen/rds/}

\author[Steel]{John Steel$^*{}^\diamond$}
\address{
  John Steel \\
  Department of Mathematics \\
  University of California at Ber\-ke\-ley \\
  Berkeley, CA 94720 \\
  USA
}
\email{steel@math.berkeley.edu}
\thanks{$\diamond$The fifth author was also supported in part by NSF Grant DMS-0855692.}
\urladdr{http://math.berkeley.edu/{\twiddle}steel/}

\author[Zeman]{Martin Zeman$^*$}
\address{
  Martin Zeman \\
  Department of Mathematics \\
  University of California at Irvine \\
  Irvine, CA 92697 \\
  USA
}
\email{mzeman@math.uci.edu}
\urladdr{http://math.uci.edu/{\twiddle}mzeman/}

\keywords{$\AD^+$, $\pmax$, Square principles.}

\subjclass[2010]{Primary 03E60; Secondary: 03E57, 03E55, 03E45, 03E35}

\title{Square principles in $\pmax$ extensions}

\maketitle

\tableofcontents


\section{Introduction} \label{section:introduction}

The forcing notion $\pmax$ was introduced by W. Hugh Woodin in the early 1990s, see Woodin
\cite{W}. When applied to models of the Axiom of Determinacy, it achieves a number of effects not
known to be obtainable by forcing over models of ZFC.

Recall that $\ADR$ asserts the determinacy of all length $\omega$ perfect information two player
games where the players alternate playing real numbers, and $\Theta$ denotes the least ordinal that is
not a surjective image of the reals. As usual, $\MM$ denotes the maximal forcing axiom, Martin's
Maximum. By $\MM(\mathfrak{c})$ we denote its restriction to partial orders of size at most continuum.
For the strengthening $\MM^{++}(\mathfrak{c})$ of this latter principle, see Woodin
\cite[Definition 2.47]{W}.

Woodin \cite[Theorem 9.39]{W} shows that when $\pmax$ is applied to a model of $\ADR\,+$ ``$\Theta$
is regular", the resulting extension satisfies $\MM^{++}(\mathfrak{c})$. A natural question is to what
extent one can extend this result to partial orders of size $\mathfrak{c}^{+}$. For many partial
orders of this size, obtaining the corresponding forcing axiom from a determinacy hypothesis should
greatly reduce the known upper bound for its large cardinal consistency strength. Moreover, using the
Core Model Induction, a method pioneered by Woodin (see Schindler-Steel \cite{SchinSt} and
Sargsyan \cite{S}), one can find lower bounds for the consistency strength of
$\MM^{++}(\mathfrak{c}^{+})$ and its consequences, which leads to the possibility of proving
equiconsistencies.

In this paper we apply $\pmax$ to theories stronger than
$\ADR \,+$ ``$\Theta$ is regular" and obtain some consequences of $\MM^{++}(\mathfrak c^+)$
on the extent of square principles, as introduced by Ronald B. Jensen \cite{J}. We recall the definitions:

\begin{definition}\label{squaresubkappa}
Given a cardinal $\kappa$, the principle $\square_{\kappa}$ asserts the existence of a sequence
$\langle C_{\alpha} \mid \alpha < \kappa^{+} \rangle$ such that for each $\alpha < \kappa^{+}$,
\begin{enumerate}
 \item
  $C_{\alpha}$ is club in $\alpha$;
 \item\label{subcoherence}
  for each limit point $\beta$ of $C_{\alpha}$, $C_{\beta} = C_{\alpha} \cap \beta$;
 \item
  the order type of each $C_\alpha$ is at most $\kappa$.
\end{enumerate}
\end{definition}

\begin{definition}\label{squareparendef}
Given an ordinal $\gamma$, the principle $\square(\gamma)$ asserts the existence of a sequence
$\langle C_{\alpha} \mid \alpha <\gamma \rangle$ satisfying the following conditions:
\begin{enumerate}
 \item
  For each $\alpha < \gamma$,
  \begin{itemize}
    \item
     $C_{\alpha}$ is club in $\alpha$;
    \item
      for each limit point $\beta$ of $C_{\alpha}$, $C_{\beta} = C_{\alpha} \cap \beta$.
  \end{itemize}
 \item
  There is no {\em thread} through the sequence, that is, there is no club $E\subseteq \gamma$
  such that $C_{\alpha} = E \cap \alpha$ for each limit point $\alpha$ of $E$.
\end{enumerate}
\end{definition}

We refer to sequences witnessing these principles as $\square_{\kappa}$-sequences or
$\square(\gamma)$-sequences, respectively. A sequence satisfying the first part of Definition \ref{squareparendef} (but possibly 
having a thread) is called a \emph{coherent sequence}. Note that $\square_{\kappa}$ implies
$\square(\kappa^{+})$, and that $\square_\omega$ is true.

\begin{remark} \label{rem:james}
Suppose that $\kappa$ is uncountable. A key distinction between these principles is that
$\square_{\kappa}$ persists to outer models that agree about $\kappa^{+}$, while
$\square(\kappa^{+})$ need not. This seems to be folklore; since we could not locate an argument in the
literature, we sketch one below.

For example, consider the poset ${\mathbb P}$ that attempts to add a $\square(\kappa^+)$-sequence
with initial segments. Note that ${\mathbb P}$ is $(\kappa+1)$-strategically closed.

Let $G$ be $\V$-generic for ${\mathbb P}$, and assume for the moment that the generic sequence
added by ${\mathbb P}$ is indeed a $\square(\kappa^+)$-sequence, say  $\langle C_\alpha\mid
\alpha<\kappa^+\rangle$. Then one can thread it by further forcing over $\V[G]$ with the poset
${\mathbb Q}$ whose conditions are closed bounded subsets $c$ of $\kappa^+$ such that $\max(c)$
is a limit point of $c$ and, for every $\alpha\in\lim(c)$, we have $c\cap\alpha=C_\alpha$.

This threading does not collapse $\kappa^+$, because ${\mathbb Q}$ is $\kappa^+$-distributive
in $\V[G]$, by a standard argument. In fact, the forcing ${\mathbb P}*\dot{\mathbb Q}$ has
a $\kappa^+$-closed dense set consisting of conditions of the form $(p,\dot{q})$ where $p$ decides the
value $\dot{q}$ to be $p(\alpha)$, for $\alpha$ the largest ordinal in $\dom(p)$. This can also
be verified by a standard density argument.

Assume now that $\PFA$ holds in $\V$, so $\square(\kappa^+)$ fails (as does any $\square(\gamma)$
for $\cf(\gamma)>\omega_1$, by Todorcevic \cite{T1}). Since $\PFA$ is preserved by
$\omega_2$-closed forcing, by K\"onig-Yoshinobu \cite{KY}, it holds in the extension by ${\mathbb P}*
\dot{\mathbb Q}$. (One could argue similarly starting from a universe where $\kappa$ is indestructibly
supercompact.)

It remains to argue that the sequence $\vec C$ added by ${\mathbb P}$ is a
$\square(\kappa^+)$-sequence. The (standard) argument verifying this was suggested by James
Cummings, and simplifies our original approach, where a more elaborate poset than ${\mathbb P}$ was
being used. Assume instead that the generic sequence is threadable, and let $\dot{c}$ be a name for a
thread. Now inductively construct a descending sequence of conditions $p_n$,  and an increasing
sequence of ordinals $\gamma_n$, for $n\in\omega$, such that, letting $\alpha_n$ be the length of
$p_n$, we have:
\begin{enumerate}
\item
$\alpha_n<\gamma_n<\alpha_{n+1}$,
\item
$p_{n+1}\forces \gamma_n\in \dot{c}$, and
\item
$p_{n+1}$ determines the value of $\dot{c}\cap\alpha_n$.
\end{enumerate}
Let $\gamma=\sup_n\gamma_n=\sup_n\alpha_n$, and let $p'$ be the union of all $p_n$. Then $p'$ is
not a condition, but can be made into one, call it $p^*$, by adding at $\gamma$ as the value
$p^*(\gamma)$, some cofinal subset of $\gamma$ of order type $\omega$ that is distinct from $\dot{c}
\cap\gamma$.

Then $p^*$ forces that the $\gamma$-th member of $\vec C$ is different from $\dot{c}\cap\gamma$.
But $\gamma$ is forced by $p^*$ to be a limit point of both $\dot{c}$ and the $\gamma$-th member of
$\vec C$, and therefore $p^*$ forces that $\dot{c}$ is not a thread through $\vec C$.

Viewing this as a density argument, we see that densely many conditions force that $\dot{c}$ is not a
thread thorough $\vec C$. Thus, $\vec C$ is a $\square(\kappa^+)$-sequence in $\V[G]$, as we wanted.

This shows that neither $\square(\kappa^+)$, nor its negation, is upward absolute to
models that agree on $\kappa^+$. See also the discussion on \emph{terminal square} in Schimmerling
\cite[\S 6]{Schim}.
\end{remark}

\begin{question}
Assuming that $\square(\kappa^+)$ fails, can it be made to hold by $\kappa^+$-closed forcing? This
seems unlikely, though we do not see a proof at the moment. If the answer is no, then the argument
above can be simplified, as there is no need to assume $\PFA$ or any such hypothesis on the
background universe.
\end{question}

Via work of Stevo Todorcevic \cite{T1, T2}, it is known that
$\MM^{++}(\mathfrak{c})$ implies $2^{\aleph_1}=\aleph_2+\neg\square(\omega_{2})$, and
$\MM^{++}(\mathfrak{c}^{+})$ implies $\neg\square(\omega_{3})$. Through work of Ernest
Schimmerling \cite{Schim} and Steel (via the Core Model Induction, see Schindler-Steel \cite{SchinSt}) it
is known that the following statement implies that the Axiom of Determinacy holds in the inner model
$\mathrm{L}(\mathbb{R})$:
\begin{equation} \label{equation:1}
 \neg\square(\omega_{2}) + \neg\square_{\omega_{2}} + 2^{\aleph_{1}} = \aleph_{2}.
\end{equation}

Theorem \ref{hyplowerbound} below shows that if $\ADR\,+\text{\rm``$\Theta$ is regular"}$ 
holds, and there is no
$\Gamma \subseteq \mathcal{P}(\mathbb{R})$ such that
$\mathrm{L}(\Gamma,\mathbb{R})\models \text{\rm``$\Theta$ is Mahlo in $\HOD$"}$, then a
weak form of $\square_{\omega_{2}}$ (restricted to a club subset of $\Theta$, which becomes $\omega_{3}$ in the $\pmax$ extension) 
holds in $\HOD$ (the inner model of all hereditarily ordinal definable sets, see Jech \cite{Jech}).
The argument is in essence a standard adaptation of the usual proof of square
principles in fine structural models; the $\hod$ analysis of Sargsyan \cite{S} makes this adaptation
possible.
As shown in Corollary \ref{hyplowerboundcor}, this gives a lower bound for the hypotheses needed for the threadability results in this paper. 

The axiom $\AD^{+}$ is an ostensible strengthening of $\AD$, introduced by Woodin (we refer the reader to page 611 of \cite{W} for the definition). 
It is an open question whether the 
two statements are equivalent.
It is also an open question whether $\AD^{+}$ follows
from $\ADR$; Woodin has shown that it does follow from $\ADR$ + $\DC$. 

Woodin's analysis of $\pmax$ is carried out abstractly in the context of $\AD^{+}$. Woodin has shown that the theory $\AD^{+}$ + $\Unif$ is equivalent to the theory 
$\AD^{+} + \ADR$ (see Theorem 9.22 or Theorem 9.24 of \cite{W}); we use this theory for 
several of our results.\footnote{$\Unif$ is the statment that if $A \subseteq \mathbb{R} \times \mathbb{R}$ is such that each 
vertical cross-section of $A$ is nonempty, then $A$ contains the graph of a function with domain $\mathbb{R}$. $\Unif$ follows easily 
from $\ADR$.}
A hypothesis weaker than $\AD^{+}$ + $\Unif$ + ``$\Theta$ is Mahlo in $\HOD$'' suffices to make $\square_{\omega_2}$
fail in the $\Pmax$ extension; see Theorem \ref{theorem:gone}.
However, this extension will not be a model of Choice: 
a standard argument (see Remark \ref{nowoname}) shows that if $\AD$ and $\Unif$ hold, then forcing with $\pmax$ cannot wellorder $\mathcal{P}(\mathbb{R})$.
Theorem \ref{theorem:weaklycompact} gives a result on the failure of $\square(\Theta)$ in determinacy models. 

The hypothesis that we use to force the negation of $\square_{\omega_2}$ in a model of Choice is in the end just slightly
stronger than the assumption of Theorem \ref{hyplowerbound} (see Theorem \ref{theorem:sqcofoo}, and the discussion before) :
\begin{quote}
$\AD^{+}$ + $\Unif$ + $\V = \mathrm{L}(\mathcal{P}(\mathbb{R}))$ + ``$\{\kappa \mid \kappa$ is
regular in $\HOD$, is a member of the Solovay sequence, and has cofinality $\omega_1\}$ is
stationary in $\Theta$".
\end{quote}
We show from this hypothesis that $\square_{\omega_{2}}$ fails in the extension given by
$\pmax$ followed by a natural forcing ($\Add(\omega_{3}, 1)$) well-ordering the power set of the reals. That  $2^{\aleph_1}=
\aleph_2$ and $\lnot\square(\omega_2)$ also hold follows from Woodin's work. Similarly, a stronger
hypothesis allows us to conclude that even $\square(\omega_{3})$ fails
in the final extension, see Theorem \ref{theorem:vanilla}.
 
The determinacy
hypotheses we use are all weaker in consistency strength than a Woodin cardinal that is limit of Woodin
cardinals \cite{S, S15a, S15b, SarTra}. This puts them within the region suitable to be reached from current techniques by a Core
Model Induction. Moreover, these hypotheses are much weaker than the previously known upper
bounds on the strength of (\ref{equation:1}) and similar theories.

Prior to our work, two methods were known to show the consistency of (\ref{equation:1}): It is a
consequence of $\PFA({\mathfrak c}^+)$, the restriction of the proper forcing axiom to partial orders of
size ${\mathfrak c}^+$ (see Todorcevic \cite{T1}), and it can be forced directly from the existence of a
{\em quasicompact} cardinal. Quasicompactness was introduced by Jensen, see Cummings
\cite{cummings} and Jensen \cite{jensen}; Sean Cox (unpublished), and possibly others, observed that
the classical argument from James E. Baumgartner \cite{Baum} obtaining the consistency of ``every
stationary subset of $\omega_2\cap{\rm cof}(\omega)$ reflects'' from weak compactness adapts
straightforwardly to this setting.

We expect that the $\HOD$ analysis (see Sargsyan \cite{S}) should allow us to extend the Core Model
Induction to establish the precise consistency strength of (\ref{equation:1}). The question of whether it is
possible to obtain  $\MM^{++}(\mathfrak c^+)$ or even $\PFA({\mathfrak c}^+)$ in a $\pmax$ extension
of some determinacy model remains open.

Since forcing axioms are connected with {\em failures} of square principles, we want to suggest some
notation to refer to these negations in a positive way, highlighting their {\em compactness} character,
and solving the slight notational inconvenience that refers to the square principle at a cardinal
successor $\kappa^+$ as $\square_\kappa$, as if it were a property of its predecessor.

\begin{definition} \label{def:threadable}
Let $\gamma$ be an ordinal. We say that \textbf{\em$\gamma$ is threadable} if and only if
$\square(\gamma)$ fails.

If $\gamma=\lambda^+$ is a successor cardinal, we say that \textbf{\em
$\gamma$ is square inaccessible} if and only if $\square_\lambda$ fails.
\end{definition}

For general background on descriptive set theory, we refer to Kechris \cite{Kechris} and Moschovakis
\cite{Moschovakis}; the latter is also a good reference for basic determinacy results. In addition, for
determinacy and Woodin's $\AD^+$ theory, we also refer to Woodin \cite{W}, Jackson \cite{Jackson},
Caicedo-Ketchersid \cite{CK}, Ketchersid \cite{K}, and references therein. Basic knowledge of
determinacy will be assumed in what follows.

We also assume some ease with $\Pmax$ arguments, although the properties of $\Pmax$ that we
require could be isolated and treated as black boxes. We refer to Woodin \cite{W} and Larson
\cite{Larson} for background.

\subsection{Acknowledgements}

The first, second, third, and fifth authors were supported in part by NSF Grants  DMS-0801189,
DMS-0801009 and DMS-1201494, DMS-0902628, and DMS-0855692, respectively.

This work started during a week-long meeting at the American Institute of Mathematics during May
16--20, 2011, as part of the SQuaRE (Structured Quartet Research Ensemble) project ``Aspects of
descriptive inner model theory", and also incorporates results obtained during the second and third such meetings
during April 16--20, 2012 and May 20--24, 2013. We are grateful to the Institute for the opportunity and support.

We also want to thank James Cummings for a suggestion that we incorporated in the discussion in
Remark \ref{rem:james}, and the referee for a careful reading and several useful suggestions.

\section{From $\HOD$ to $\HOD_{\mathcal{P}(\mathbb{R})}$} \label{section:hypotheses}

Some of our results use hypotheses on inner models of the form
$\HOD_{\mathcal{P}_{\kappa}(\mathbb{R})}$,
where $\mathcal{P}_{\kappa}(\mathbb{R})$ denotes the collection of sets of reals of Wadge rank
less than $\kappa$. 
We show in this section how some of these hypotheses follow from statements about
$\HOD$. With the possible exception of Theorem \ref{getmeas}, none of the results in this section is
new. The key technical tool is given by Lemma \ref{voplem}.

We refer the reader to Kechris \cite{Kechris} for background on the Wadge
hierarchy. If $A$ is a set of reals, we let $|A|_W$ denote its Wadge rank.
Recall that $\Theta$ is the least ordinal that is not a surjective image of the reals, and that the
\emph{Solovay sequence}, introduced by Robert M. Solovay \cite{Solovay}, is the unique increasing
sequence of ordinals $\langle \theta_{\alpha} \mid \alpha \leq \gamma \rangle$ such that
\begin{itemize}
\item
$\theta_{0}$ is the least ordinal that is not the surjective image of the reals by an ordinal
definable function;
\item
for each $\alpha < \gamma$, $\theta_{\alpha + 1}$ is the least ordinal that is not the surjective
image of the reals by a function definable from an ordinal and a set of reals of Wadge rank
$\theta_{\alpha}$;
\item
for each limit ordinal $\beta \leq \gamma$, $\theta_{\beta} = \sup\{\theta_{\alpha} \mid \alpha < \beta\}$;
and
\item
$\theta_{\gamma} = \Theta$.
\end{itemize}

\begin{remark}\label{Solovayremark}
The proof of Solovay \cite[Lemma 0.2]{Solovay} shows that, under $\AD$,
whenever $\gamma$ is an ordinal, and $\phi \colon \mathbb{R} \to \gamma$ is a surjection, there exists
a set of reals of Wadge rank $\gamma$ definable from $\phi$. From this it follows that if $|F|_W =
\theta_\alpha < \Theta$, then every set of reals of Wadge rank less than
$\theta_{\alpha + 1}$ is definable from $F$, a real and an ordinal. In turn, it follows from this that there is
no surjection from $\mathbb{R}$ to $\theta_{\alpha + 1}$ ordinal definable from any set of reals of
Wadge rank less than $\theta_{\alpha + 1}$. It follows moreover that for each $\alpha \leq \gamma$ as in the definition of the Solovay sequence, 
$\mathcal{P}_{\theta_{\alpha}}(\mathbb{R}) = \mathcal{P}(\mathbb{R}) \cap \HOD_{\mathcal{P}_{\theta_{\alpha}}(\mathbb{R})}$ and $\theta_{\alpha}$ is the $\Theta$ of
$\HOD_{\mathcal{P}_{\theta_{\alpha}}(\mathbb{R})}$.
\end{remark}

The following is due to Woodin, building on work of Petr Vop\v enka (see Jech
\cite[Theorem 15.46]{Jech}):

\begin{lemma}[$\mathsf{ZF}$]\label{voplem} For each ordinal $\xi$ there exists a complete Boolean algebra
$\mathbb{B}$ in $\HOD$ such that  for each $E \subseteq \xi$ there is a $\HOD$-generic filter
$H \subseteq \mathbb{B}$ with
 $$ \HOD_{E} = \HOD[H]. $$
Furthermore, if $\AD$ holds, $\xi < \Theta$ and $\theta$ is the least member of the Solovay sequence
greater than $\xi$, then $\mathbb{B}$ can be taken to have cardinality at most $\theta$ in $\V$.
\end{lemma}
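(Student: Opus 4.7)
The plan is to follow the classical Vop\v{e}nka construction (cf.\ Jech \cite[Theorem 15.46]{Jech}), carried out entirely inside $\HOD$. In $\HOD$, define $\mathbb{B}$ to be the Boolean algebra whose underlying set is the collection of $\OD$ subsets of $\mathcal{P}(\xi)$, with Boolean operations given by set-theoretic union, intersection, and complement within $\mathcal{P}(\xi)$. This is a complete Boolean algebra in $\HOD$, since if $X \in \HOD$ is a set whose members are $\OD$ subsets of $\mathcal{P}(\xi)$, then $X$ itself is $\OD$, so $\bigcup X$ is an $\OD$ subset of $\mathcal{P}(\xi)$, and serves as the supremum of $X$ in $\mathbb{B}$.

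For each $E \subseteq \xi$, set $H_{E} = \{ P \in \mathbb{B} : E \in P \}$, which is an ultrafilter on $\mathbb{B}$ by construction. To check $\HOD$-genericity, let $A \in \HOD$ be a maximal antichain of $\mathbb{B}$. Then $\mathcal{P}(\xi) \setminus \bigcup A$ is an $\OD$ subset of $\mathcal{P}(\xi)$, and if it were nonempty it would belong to $\mathbb{B}$ and be disjoint from $A$, contradicting maximality. Hence $E \in P$ for some $P \in A$, so $A \cap H_{E} \neq \emptyset$. The equality $\HOD[H_{E}] = \HOD_{E}$ follows from the definability of $H_{E}$ from $E$ and $\mathbb{B} \in \HOD$ (for one inclusion), together with the recovery $\alpha \in E \iff \{ F \subseteq \xi : \alpha \in F \} \in H_{E}$ (for the other).

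For the cardinality bound under $\AD$, fix $\xi < \theta$ with $\theta = \theta_{\alpha+1}$ the least Solovay ordinal above $\xi$. I would aim to exhibit an $\OD$ surjection from $\theta$ onto $\mathbb{B}$. Enumerating $\OD$ formulas with an ordinal parameter yields a class surjection $\mathrm{Ord} \to \mathbb{B}$, and the task is to factor this through $\theta$. By Remark \ref{Solovayremark}, every $\OD$ subset of $\mathcal{P}(\xi)$ should be definable from a fixed set of reals $F$ of Wadge rank $\theta_{\alpha}$, together with a real and an ordinal; the bound $\theta$ should then follow from the defining property of $\theta_{\alpha+1}$, namely that there is no surjection from $\mathbb{R}$ onto $\theta_{\alpha+1}$ definable from $F$ and an ordinal.

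I expect the chief difficulty to lie precisely in this last cardinality count: the $\OD$-codes for subsets of $\mathcal{P}(\xi)$ must be organized carefully so that the equivalence classes under ``define the same element of $\mathbb{B}$'' are in bijection with an initial segment of ordinals of length at most $\theta$, rather than stretching arbitrarily far up in $\mathrm{Ord}$. The genericity and the identification $\HOD[H_{E}] = \HOD_{E}$ are routine adaptations of the standard Vop\v{e}nka proof.
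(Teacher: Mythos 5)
Your construction is the same Vop\v{e}nka algebra the paper uses, and the genericity argument via maximal antichains is right, but there are two genuine gaps. First, the algebra as you define it is \emph{not} an element of $\HOD$: its members are $\OD$ subsets of $\mathcal{P}(\xi)$, and such a set is ordinal definable without being \emph{hereditarily} ordinal definable, since its elements are arbitrary subsets of $\xi$. (For the same reason, a family $X\in\HOD$ of ``elements of $\mathbb{B}$'' in your sense cannot exist except trivially, so your completeness argument does not get off the ground as stated.) The paper's fix, which you need from the very first step and not only for the cardinality count, is to reflect: fix $\gamma$ so that every $\OD$ subset of $\mathcal{P}(\xi)$ has the form $\{x\subseteq\xi\mid \V_{\gamma}\models\phi(x,s)\}$ with $s\in[\gamma]^{<\omega}$, well-order the equivalence classes of codes $(\phi,s)$ under ``define the same set'' in an $\OD$ way, and take $\mathbb{B}$ to be the induced isomorphic algebra on an ordinal $\eta$; that copy is in $\HOD$. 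Relatedly, $\HOD_{E}\subseteq\HOD[H_{E}]$ does not follow from merely recovering $E$ inside $\HOD[H_{E}]$: a set $A\in\HOD_{E}$ is carved out by a formula with parameter $E$ evaluated in $\V$, which $\HOD[H_{E}]$ cannot evaluate. One must translate an $\OD$ relation $T$ with $A=\{\zeta\mid T(\zeta,E)\}$ into the relation $T^{*}(\zeta,p)$ asserting that $T(\zeta,D)$ holds for all $D$ in the set coded by $p$; then $T^{*}\in\HOD$ and $A$ is computed from $T^{*}$ and the generic filter.

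Second, the cardinality bound, which you explicitly leave open, is the clause of the lemma that the rest of the paper actually uses, so it cannot be deferred. The missing ingredient is the Moschovakis Coding Lemma: from $\xi$ and a set of reals $F$ of Wadge rank $\theta_{\alpha}$ (take $F=\varnothing$ when $\theta=\theta_{0}$, a case your sketch omits) one obtains a surjection $\pi\colon\mathbb{R}\to\mathcal{P}(\xi)$ definable from $\xi$ and $F$. For an $\OD$ subset $A$ of $\mathcal{P}(\xi)$, the preimage $\pi^{-1}[A]$ is a set of reals ordinal definable from $F$, hence of Wadge rank below $\theta$, hence definable from $F$, a real, and a finite subset of $\theta$; pushing forward through $\pi$, so is $A$. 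Then, for each fixed finite $a\subseteq\theta$, the definability order on the sets $A$ definable from $F$, $a$, and a real induces a pre-well-ordering of the reals definable from $F$ and $a$, which therefore has length less than $\theta$; summing over the (at most $\theta$ many) finite $a$ gives $|\eta|\le\theta$ in $\V$. Without routing through sets of reals in this way, the defining property of $\theta_{\alpha+1}$ gives you no purchase on subsets of $\mathcal{P}(\xi)$.
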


\begin{proof}
Fix an ordinal $\gamma$ such that every ordinal definable subset of $\mathcal{P}(\xi)$ is ordinal
definable in $\V_{\gamma}$. Let $\mathbb{B}_{0}$ be the following version of the \emph{Vop\v enka
algebra}: ${\mathbb B}_0$ is the Boolean algebra consisting of all sets of the form
 $$ A_{\phi, s} = \{ x \subseteq \xi \mid \V_{\gamma} \models \phi(x,s)\}, $$
where $\phi$ is a formula and $s$ is a finite subset of $\gamma$, ordered by inclusion.

The relation $A_{\phi, s} = A_{\phi',s'}$ is ordinal definable, and there is an ordinal definable
well-ordering of the corresponding equivalence classes. Let $\eta$ be the length of this well-ordering,
let $h \colon \eta \to \mathbb{B}_{0}$ be the corresponding inverse rank function, and let
$\mathbb{B}_{1}$ be the Boolean algebra with domain $\eta$ induced by $h$. Since the relation
$A_{\phi,s} \subseteq A_{\phi',s'}$ is ordinal definable, $\mathbb{B}_{1}$ is in $\HOD$.

Given a filter $G \subseteq \mathbb{B}_{0}$, let $E(G)$ be the set of $\alpha \in \xi$ such that
$$\{ E\subseteq \xi \mid \alpha \in E\} \in G.$$
Then for any $E \subseteq \xi$, $E(\{A \in \mathbb{B}_{0} \mid E \in A\}) = E$. According to
Vop\v{e}nka's Theorem (see Caicedo-Ketchersid \cite{CK}),
\begin{enumerate}
\item\label{allin}
For every $E \subseteq \xi$, $H_{E} = h^{-1}[\{A \in \mathbb{B}_{0} \mid E \in A\}]$ is $\HOD$-generic
for $\mathbb{B}_{1}$;
\item\label{splitsend}
There exists a $\mathbb{B}_{1}$-name $\dot{E} \in \HOD$ such that  if $H \subseteq \mathbb{B}_{1}$
is $\HOD$-generic and $G = h[H]$, then $E(G) = \dot{E}_{H}$.
\end{enumerate}

Now suppose that $H \subseteq \mathbb{B}_{1}$ is $\HOD$-generic, and let $E = \dot{E}_{H}$.
Let us see first that $\HOD_{E} \subseteq \HOD[H]$. Suppose that $A$ is a set of ordinals in
$\HOD_E$, and fix an ordinal $\delta$ such that $A$ and $E$ are both subsets of $\delta$. Then there
is an ordinal definable relation $T \subseteq \delta \times \mathcal{P}(\delta)$ such that
$A = \{\zeta < \delta\mid T(\zeta,E)\}$. Define the relation $T^{*}$ on $\delta \times \mathbb{B}_{1}$ by
setting $T^*(\zeta,p)$ if and only if $T(\zeta,D)$ holds for all $D \in h(p)$. Then $T^{*} \in \HOD$, and
$A$ is in $\HOD[H]$, since $A$ is the set of $\zeta < \sup(A)$ such that there exists a $p$ in $H$ for
which $T^*(\zeta,p)$ holds.

For the other direction, note first that for any set $F \subseteq \xi$, $H_{F}$ is in $\HOD_{F}$. It follows that $H_{E}$ is in $\HOD[H]$. Moreover, 
the previous paragraph shows that there is a $\mathbb{B}_{1}$-name in $\HOD$ for $H_{\dot{E}_{H}}$. Densely many conditions
in $\mathbb{B}_{1}$ then decide whether or not the generic filter $H$ will be equal to $H_{\dot{E}_{H}}$.
However, for any condition $p \in \mathbb{B}_{1}$, if $F$ is any element of $h(p)$ then
$\dot{E}_{H_{F}} = F$, which means that $p$ cannot force the generic filter $H$ to be different from
$H_{\dot{E}_{H}}$. It follows then that $H$ is in $\HOD_{E}$ so $\HOD[H] = \HOD_{E}$, whenever
$H \subseteq \mathbb{B}_{1}$ is $\HOD$-generic and $E = \dot{E}_{H}$.

Finally, assume that $\AD$ holds, and let $\theta$ be as in the statement of the lemma. Then $\theta$ is
either $\theta_{0}$ or $\theta_{\alpha + 1}$ for some $\alpha$. Let $F$ be empty if the first case holds,
and a set of reals of Wadge rank $\theta_{\alpha}$ otherwise.

Let us see that the cardinality of $\eta$ is at most $\theta$ in $\V$. By Remark \ref{Solovayremark} and
the Moschovakis Coding Lemma (see for instance Koellner-Woodin \cite[Theorem 3.2]{KoellnerWoodin} or Kanamori \cite[Theorem 28.15]{kanamori}) there is a
surjection $\pi \colon \mathbb{R} \to \mathcal{P}(\xi)$ definable from $\xi$ and
$F$. If $A$ is an ordinal definable subset of $\mathcal{P}(\xi)$, then $\pi^{-1}[A]$ is ordinal definable
from $F$, which means that
$|\pi^{-1}[A]|_W<\theta$, which again by Remark \ref{Solovayremark} implies
that $A$ is definable from $F$,
a real and a finite subset of $\theta$. For each fixed finite $a \subseteq \theta$, the definability order on the
sets $A \subseteq \mathcal{P}(\xi)$ definable from $F$, $a$, and a real, induces a pre-well-ordering of
the reals definable from $F$ and $a$, which must then have
order type less than $\theta$. It follows from this that $\eta$ also has cardinality at most $\theta$.
\end{proof} 

By Theorem 9.10 of \cite{W}, 
if $\AD^{+}$ holds, then it holds in every inner model of $\ZF$ containing $\mathbb{R}$. 

\begin{remark}\label{boundedsetrem} Under $\AD^{+}$, every set of reals is ordinal definable from a set of ordinals, and in fact this set can be
taken to be a bounded subset of $\Theta$ (see Woodin \cite[Lemma 9.5]{W}).  
\end{remark}

Combining this fact with Lemma \ref{voplem} gives the following folklore result.

\begin{theorem}\label{regpres}
Assume that $\AD^{+}$ holds. If $\theta$ is a member of the Solovay sequence and $\theta$ is regular in
$\HOD$, then $\theta$ is regular in $\HOD_{\mathcal{P}_{\theta}(\mathbb{R})}$. For any $S \subseteq \theta$ in $\HOD$,
if $S$ is stationary in $\HOD$ then $S$ is stationary in $\HOD_{\mathcal{P}_{\theta}(\mathbb{R})}$.
\end{theorem}

\begin{proof}
We prove the first part and leave the proof of the second, which is similar, to the reader. 
If $\theta = \theta_{0}$ or $\theta$ is a successor member of the Solovay sequence, there
is a set of reals $A$ from which Wadge-cofinally many sets of reals are ordinal definable. It follows then
that $\theta$ is regular in $\HOD_{\mathcal{P}_{\theta}(\mathbb{R})}$ without any assumption
on $\HOD$, since for each function $f \colon \mathbb{R} \to \theta$ in $\HOD_{\mathcal{P}_{\theta}(\mathbb{R})}$ there is a surjection from
$\mathbb{R}$ to $\sup(f[\mathbb{R}])$ definable from $A$.

Now suppose that $\theta$ is a limit in the Solovay sequence, and that $f\colon\alpha\to\theta$ is a
cofinal function in $\HOD_{\mathcal{P}_{\theta}(\mathbb{R})}$, for some
$\alpha<  \theta$. Then $f$ is ordinal definable from some set of reals in $\mathcal{P}_{\theta}
(\mathbb{R})$ that itself is ordinal
definable from a bounded subset $A$ of
$\theta$ (see Remark \ref{boundedsetrem}).

Pick $\theta_\xi<  \theta$ such that $A$ is bounded in $\theta_\xi$.
By Lemma \ref{voplem}, there is a set $H$, generic over $\HOD$ via a partial order of
cardinality less than $\theta_{\xi+1}$ in $\HOD$,
and such that $\HOD_{A} \subseteq \HOD[H]$. Then $f \in \HOD[H]$. By cardinality considerations, the
regularity of $\theta$ in $\HOD$ is preserved in $\HOD[H]$, giving a contradiction.
\end{proof}

With a little more work, one gets Theorem \ref{getmeas} below. 

\begin{remark}\label{nowoname} A standard argument shows that, assuming that both $\AD$ and $\Unif$ hold, there is no set $A$ such that 
every set of reals is ordinal definable from $A$ and a real (consider the set of pairs $(x,y)$ such that $y$ is not 
ordinal definable from $A$ and $x$). It follows, for instance, that $\AD$ + $\Unif$ implies that there is 
no function from an ordinal to $\mathcal{P}(\mathbb{R})$ whose range is Wadge-cofinal, and also that the Solovay sequence has 
limit length (applying Remark \ref{Solovayremark}). Similarly, since $\pmax \subseteq H(\aleph_{1})$, 
$\AD$ + $\Unif$ implies that there is no $\pmax$-name for a wellordering of $\mathcal{P}(\mathbb{R})$. 
\end{remark} 

By Theorem 9.24 of \cite{W} 
(modulo the fact that $\ADR$ reflects to the inner model $\mathrm{L}(\mathcal{P}(\mathbb{R}))$), if $\AD^{+}$ holds and the Solovay sequence 
has limit length then $\ADR$ (and thus $\AD$ + $\Unif$) holds. 


Given a cardinal $\theta$, a filter on $\theta$ is $\theta$-\emph{complete} if it is closed under
intersections of cardinality less than $\theta$, and $\mathbb{R}$-\emph{complete} if it is closed under
intersections indexed by $\mathbb{R}$.

\begin{theorem}\label{getmeas}
Assume that $\AD^{+}$ holds, and that $\theta$ is a limit on the Solovay sequence. Let $F$ be a
$\theta$-complete filter on $\theta$ in $\HOD$, and let $F'$ be the set of elements of
$\mathcal{P}(\theta) \cap \HOD_{\mathcal{P}_{\theta}(\mathbb{R})}$ containing some member of $F$.
Then the following hold:
\begin{enumerate}
\item The filter $F'$ is $\mathbb{R}$-complete in $\HOD_{\mathcal{P}_{\theta}(\mathbb{R})}$.
\item If $F$ is normal in $\HOD$, then $F'$ is normal in $\HOD_{\mathcal{P}_{\theta}
(\mathbb{R})}$.
\item If $F$ is an ultrafilter in $\HOD$, then $F'$ is an ultrafilter in $\HOD_{\mathcal{P}_{\theta}
(\mathbb{R})}$.
\end{enumerate}
Moreover, every $\theta$-complete filter on $\theta$ in $\HOD_{\mathcal{P}_{\theta}(\mathbb{R})}$ is
$\mathbb{R}$-complete.
\end{theorem}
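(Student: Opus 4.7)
The plan is to reduce questions about $F'$ in $\HOD_{\mathcal{P}_{\theta}(\mathbb{R})}$ to questions about the filter generated by $F$ in small generic extensions $\HOD[H]$ of $\HOD$, paralleling the proof of Theorem \ref{regpres}. The moreover clause handles the passage from $\theta$-completeness to $\mathbb{R}$-completeness inside $\HOD_{\mathcal{P}_{\theta}(\mathbb{R})}$, so I would organize the proof as: first the moreover; then (2) via Lemma \ref{voplem} and standard small-forcing ultrafilter preservation; and finally (1) by combining the same Vopenka argument (yielding $\theta$-completeness of $F'$) with the moreover.

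To prove the moreover, let $G$ be a $\theta$-complete filter on $\theta$ in $\HOD_{\mathcal{P}_{\theta}(\mathbb{R})}$ and $\langle X_{r} : r \in \mathbb{R} \rangle$ an $\mathbb{R}$-indexed family of $G$-sets. Since $\HOD_{\mathcal{P}_{\theta}(\mathbb{R})}$ admits a canonical well-ordering of its sets definable from $\mathcal{P}_{\theta}(\mathbb{R})$, $\mathsf{AC}$ holds in it, so the image $\{X_{r} : r \in \mathbb{R}\} \subseteq \mathcal{P}(\theta)$ can be well-ordered in some order type $\kappa$. Since this image is a surjective image of $\mathbb{R}$ via $r \mapsto X_{r}$, and since Remark \ref{Solovayremark} gives $\Theta^{\HOD_{\mathcal{P}_{\theta}(\mathbb{R})}} = \theta$, we must have $\kappa < \theta$; $\theta$-completeness of $G$ then reduces the $\mathbb{R}$-indexed intersection to a $<\theta$-indexed one and places it in $G$.

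For (2) and the first half of (1), any $X \in \HOD_{\mathcal{P}_{\theta}(\mathbb{R})} \cap \mathcal{P}(\theta)$, or any $<\theta$-sequence of such, is $\OD$ from a set of reals in $\mathcal{P}_{\theta}(\mathbb{R})$ and, crucially, from a bounded subset $A \subseteq \theta_{\xi}$ with $\theta_{\xi}$ in the Solovay sequence and $\theta_{\xi+1} < \theta$ (possible since $\theta$ is a limit). Lemma \ref{voplem} then yields $\mathbb{B} \in \HOD$ of cardinality at most $\theta_{\xi+1}$ and a $\HOD$-generic $H \subseteq \mathbb{B}$ with $\HOD_{A} = \HOD[H]$, so the relevant object lies in $\HOD[H]$. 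A standard $\mathbb{B}$-name argument then shows that the filter $\bar F$ generated by $F$ in $\HOD[H]$ inherits $\theta$-completeness (and the ultrafilter property, if $F$ is an ultrafilter) from $F$: pick an $F$-refinement of a given $<\theta$-sequence of $\bar F$-sets inside $\HOD[H]$ using $\mathsf{AC}$, gather the $\leq \theta_{\xi+1}$ possible values of each name into a $<\theta$-sized subfamily of $F$ in $\HOD$, and intersect via the $\theta$-completeness of $F$. Part (2) is then immediate ($X$ or $\theta \setminus X$ lies in $\bar F$, hence in $F'$), and the same reduction applied to a $<\theta$-sequence of $F'$-sets yields $\theta$-completeness of $F'$; combined with the moreover, this delivers (1).

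The main technical obstacle will be the auxiliary fact that every set of reals in $\mathcal{P}_{\theta}(\mathbb{R})$ is $\OD$ from a bounded subset of $\theta$, needed so that Lemma \ref{voplem} can be fed an actual set of \emph{ordinals}. This seems to rely on Woodin's $\mathsf{AD}^{+}$-style result that every set of reals is $\OD$ from a bounded subset of $\Theta$, suitably reflected to the Solovay layer below $\theta$; a variant of this reflection (for single ordinals rather than full sets of ordinals) already appears in the proof of Theorem \ref{regpres} above.
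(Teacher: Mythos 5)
Your handling of conclusions (1) and (2) modulo the ``moreover'' clause follows essentially the paper's route: reduce any single set (or $<\theta$-sequence of sets) in $\mathcal{P}(\theta)\cap\HOD_{\mathcal{P}_{\theta}(\mathbb{R})}$ to a small generic extension $\HOD[H]$ of $\HOD$ via Lemma \ref{voplem}, and invoke the standard small-forcing preservation of $\theta$-complete (ultra)filters. That part is fine, as is your observation that $\mathbb{R}$-completeness of $F'$ follows from $\theta$-completeness of $F'$ together with the ``moreover'' clause. The auxiliary fact you flag --- that every set of reals of Wadge rank $<\theta$ is ordinal definable from a bounded subset of $\theta$ --- is indeed what is needed; the paper obtains it from the fact that every such set is Suslin in $\HOD_{\mathcal{P}_{\theta}(\mathbb{R})}$, $\theta$ being a limit of the Solovay sequence.

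The genuine gap is in your proof of the ``moreover'' clause, which is the heart of the theorem and the only part the paper argues in detail. You assert that $\HOD_{\mathcal{P}_{\theta}(\mathbb{R})}$ satisfies $\mathsf{AC}$ because it admits a canonical well-ordering definable from $\mathcal{P}_{\theta}(\mathbb{R})$. This is false: you are conflating $\HOD_{\{X\}}$ (one fixed parameter, which does satisfy choice) with $\HOD_{X}$ (all elements of $X$ allowed as parameters). The model $\HOD_{\mathcal{P}_{\theta}(\mathbb{R})}$ contains every real and in fact satisfies $\ADR$ with $\Theta=\theta$ --- the paper uses exactly this in its own proof --- so a well-ordering of it would yield a well-ordering of $\mathbb{R}$ in $\V$, contradicting $\AD$. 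Consequently your claim that the image $\{X_{r} : r\in\mathbb{R}\}$ can be well-ordered in some type $\kappa<\theta$ is unjustified: if $r\mapsto X_{r}$ were injective you would literally be well-ordering the reals, and nothing rules that out a priori. The paper circumvents this by transposing the family: it sets $B_{\alpha}=\{x\in\mathbb{R} \mid \alpha\in G(x)\}$, obtaining a $\theta$-indexed family of \emph{sets of reals}, and proves there are fewer than $\theta$ distinct $B_{\alpha}$'s by two Wadge-theoretic facts (there is no $\theta$-sequence of sets of reals of Wadge rank unbounded in $\theta$, by Uniformization; and no prewellordering of length $\theta$ arises from sets below a fixed Wadge rank). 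It then shows that any $F'$-positive class $\{\alpha \mid B_{\alpha}=B\}$ forces $B=\mathbb{R}$, whence $\bigcap_{x}G(x)=\{\alpha \mid B_{\alpha}=\mathbb{R}\}\in F'$ by $\theta$-completeness. Some determinacy-specific argument of this kind is unavoidable here; the choice-based shortcut you propose is not available.
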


\begin{proof}
Again by Remark \ref{boundedsetrem} (and the last line of Remark \ref{Solovayremark}), every element of $\HOD_{\mathcal{P}_{\theta}(\mathbb{R})}$ is ordinal definable from a bounded subset of $\theta$.
By Lemma \ref{voplem}, every element of $\HOD_{\mathcal{P}_{\theta}(\mathbb{R})}$ exists in a generic extension of $\HOD$
by a partial order of cardinality less than $\theta$ in $\HOD$. The second and third conclusions of the lemma
follow, as well as the fact that $F'$ is $\theta$-complete in $\HOD_{\mathcal{P}_{\theta}(\mathbb{R})}$. It
suffices then to prove the last part of the theorem.

Let $F'$ be a $\theta$-complete filter on $\theta$, and $G$ a function from $\mathbb{R}$ to $F'$, both in
$\HOD_{\mathcal{P}_{\theta}(\mathbb{R})}$. For each $\alpha < \theta$, let
$B_{\alpha} = \{ x \in \mathbb{R} \mid\alpha \in G(x)\}$. Then $\bar{B} = \langle B_{\alpha} \mid
\alpha < \theta \rangle$ is ordinal definable from a set of reals of Wadge rank less than $\theta$.
Since $\AD$ + $\Unif$ holds in $\HOD_{\mathcal{P}_{\theta}(\mathbb{R})}$ 
(see the remarks Remarks \ref{boundedsetrem} and \ref{nowoname}), and $\theta$ is the $\Theta$ of
this model (see Remark \ref{Solovayremark}), the Wadge ranks of the $B_{\alpha}$'s cannot be cofinal in 
$\theta$ (by Remark \ref{nowoname}).
On the other hand, if unboundedly many of the $B_{\alpha}$ were to be distinct sets Wadge-below some fixed set of reals, 
then one could define from this situation a pre-well-ordering of length
$\theta$, which is impossible. So $\bar{B}$ must contain fewer than $\theta$ many distinct sets.

Suppose that $B \subseteq \mathbb{R}$ is such that $\{ \alpha < \theta \mid B_{\alpha} = B \}$ is $F'$-
positive. For each $x \in \mathbb{R}$, $G(x) \in F'$, so there is an $\alpha < \theta$ for which $x \in
B_{\alpha}$ and $B_{\alpha} = B$.
It follows that $B = \mathbb{R}$. Since $F'$ is $\theta$-complete, $$\{ \alpha \mid B_{\alpha} =
\mathbb{R}\} \in F'.$$
Since $\bigcap_{x \in \mathbb{R}} G(x) = \{ \alpha \mid B_{\alpha} = \mathbb{R}\}$, we are done.
\end{proof}

\section{Square in $\pmax$ extensions of weak models of determinacy} \label{section:square}

Theorem ~\ref{hyplowerbound} below shows that under a certain minimality hypothesis on 
our determinacy models, partial $\square$-sequences of length $\Theta$ exist. 
Corollary \ref{hyplowerboundcor} then shows that $\square_{\omega_{2}}$ holds in the 
$\pmax * \Add(\Theta, 1)$-extension of a model of this hypothesis. In Section \ref{section:nochoice}
we will see that $\pmax$ alone does not necessarily add a $\square_{\omega_{2}}$-sequence in this 
context. 

\begin{theorem}\label{hyplowerbound}
Assume that 
$\ADR$ holds
and that there is no $\Gamma \subseteq \mathcal{P}(\mathbb{R})$ such that
$\mathrm{L}(\Gamma, \mathbb{R}) \models \text{\rm``$\Theta$ is Mahlo in
$\HOD$"}$.  Then in $\HOD$ there exist a closed unbounded set 
$C^*\subseteq\Theta$ and a
sequence $\langle c_\tau\mid\tau\in C^*\rangle$ such that the following hold. 
\begin{itemize}
\item[(a)] Each $c_\tau$ is a closed unbounded subset of $\tau$ and
           $c_\tau\subseteq C^*$ unless the order-type of $c_\tau$ is $\omega$. 
\item[(b)] If $\bar{\tau}$ is a limit point of $c_\tau$ then
           $c_{\bar{\tau}}=c_\tau\cap\bar{\tau}$. 
\item[(c)] $\otp(c_\tau)<\tau$. 
\end{itemize}
\end{theorem}

\begin{remark} The assumption that $\Theta$ is not Mahlo in $\mathrm{L}(\mathcal{P}(\mathbb{R}))$ is necessary for the theorem above, but
the hypothesis that there is no $\Gamma$ properly contained in $\mathcal{P}(\mathbb{R})$
such that $\mathrm{L}(\Gamma, \mathbb{R}) \models \text{\rm``$\Theta$ is Mahlo in $\HOD$"}$ may possibly be weakened. Its role is to make the $\mathrm{HOD}$ analysis from
\cite{S} applicable.
\end{remark}

The heart of the construction takes place in $\hod$ and is a straightforward
combination of standard constructions of square sequences as developed in Jensen \cite{J},
Schimmerling-Zeman \cite{chsq}, and Zeman \cite{gls}, adapted to the context of strategic extender
models as developed in Sargsyan \cite{S}. In order to stay close to the constructions in
Schimmerling-Zeman \cite{chsq} and Zeman \cite{gls}, we use fine structure notation and terminology
as in Zeman \cite{imlc}; the rest of the notation and terminology is consistent with that in Mitchell-Steel
\cite{fsit}, Steel \cite{cmip}, and Sargsyan \cite{S}.

\begin{proof}
The key technical tool is a condensation lemma for initial segments of $\hod$
which can be proved using the standard argument modified to the strategic
extender models from Sargsyan \cite{S}. Unlike the square constructions in
Schimmerling-Zeman \cite{chsq} and Zeman \cite{gls}, our situation is specific in the sense that the
initial segments of $\hod$ used for the definition of the elements of our square sequence are
never pluripotent (see Schimmerling-Zeman \cite{chsq} and Zeman \cite{gls}), that is, they do not give
rise to protomice. This makes it possible to run the construction without analysis of
extender fragments, so the construction does not differ too much from that in
$\mathrm{L}$. The reason why this is the case is a consequence of the
following corollary of our smallness assumption, see Sargsyan \cite{S}:

\begin{equation}\label{n:e:nonoverlap}
\mbox{If $\theta_\alpha<\Theta$, then $\theta_\alpha$ is not overlapped by an
extender on the $\hod$-sequence.}
\end{equation}

We now formulate the condensation lemma. Recall that if
$\theta_\alpha<\Theta$, then $\Sigma_\alpha$ is the iteration strategy for
$\hod\res\theta_\alpha$ in $\hod$.

\begin{lemma}\label{n:l:condensation}
Assume ${\mathcal N}$ is an initial segment of $\hod$. Let $\theta_\alpha<\Theta$, and let
${\mathcal M}$ be a sound strategic premouse such that
$\rho^{n+1}_{{\mathcal M}}=\theta_\alpha$. Let finally $\sigma:{\mathcal M}\to{\mathcal N}$ be a
$\Sigma^{(n)}_0$-preserving map with critical point $\theta_\alpha$, $\sigma(\theta_\alpha)=
\theta_\beta$, and $\sigma\in\hod$. Then ${\mathcal M}$ is an initial segment of $\hod$.
\end{lemma}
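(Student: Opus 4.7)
The plan is to adapt the standard fine-structural condensation argument of Schimmerling-Zeman \cite{chsq} and Zeman \cite{gls} to the strategic extender models of Sargsyan \cite{S}. A first reduction: since $\mathrm{crit}(\sigma)=\theta_\alpha$ and ${\mathcal N}\trianglelefteq\HOD$, the initial segment ${\mathcal M}|\theta_\alpha$ coincides with $\HOD|\theta_\alpha$, and in particular ${\mathcal M}$ already carries the correct strategy predicate (restrictions of $\Sigma_\alpha$) below $\theta_\alpha$. What must be shown is that the part of ${\mathcal M}$ above $\theta_\alpha$ agrees with the corresponding initial segment of $\HOD$.

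I would next endow ${\mathcal M}$ with a $\HOD$-definable iteration strategy above $\theta_\alpha$ by copying. Given any normal $\Sigma_\alpha$-iteration tree ${\mathcal T}$ on ${\mathcal M}$ based on what lies above $\theta_\alpha$, the standard Shift Lemma produces a copied tree $\sigma{\mathcal T}$ on ${\mathcal N}$, and ${\mathcal N}$ possesses an iteration strategy as an initial segment of $\HOD$. Since $\sigma\in\HOD$, the resulting strategy for ${\mathcal M}$ is definable in $\HOD$. I would then coiterate ${\mathcal M}$ with the initial segment ${\mathcal Q}\trianglelefteq\HOD$ of the same ordinal height (or with $\HOD$ itself if no such ${\mathcal Q}$ exists), the coiteration taking place entirely within $\HOD$.

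Here the smallness hypothesis, through its consequence (\ref{n:e:nonoverlap}) that no extender on the $\HOD$-sequence overlaps $\theta_\alpha$, is exactly what makes the argument go through cleanly: neither side of the coiteration produces pluripotent levels, so no protomice appear, and no analysis of extender fragments is needed. The coiteration therefore has the shape of an ordinary $L$-style comparison. By the Dodd-Jensen lemma applied using the strategy just constructed, together with the soundness hypothesis $\rho^{n+1}_{{\mathcal M}}=\theta_\alpha$, the ${\mathcal M}$-side of the comparison is trivial; soundness of ${\mathcal Q}$ (inherited from $\HOD$) then forces ${\mathcal M}\trianglelefteq{\mathcal Q}\trianglelefteq\HOD$.

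The most delicate point, which I expect to be the main obstacle, is the correct bookkeeping of the strategy predicate both inside the copied trees and along $\sigma$: one must verify that the branch-information encoded via $\Sigma_\alpha$ on each side of the comparison is preserved by iteration and by $\sigma$, so that the comparison really terminates with ${\mathcal M}$ matching an initial segment of $\HOD$ rather than a strategic premouse built from a different fragment of $\Sigma_\alpha$. This verification is exactly where one uses that the $\hod$-analysis of \cite{S} ensures that $\Sigma_\alpha$-agreement behaves like extender agreement in the standard setting, reducing the situation to the routine adaptation already alluded to in the excerpt.
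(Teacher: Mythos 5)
Your proposal follows essentially the same route as the paper: agreement below $\theta_\alpha$, pulling back an iteration strategy for ${\mathcal M}$ along $\sigma$ (which lies in $\HOD$), comparing with $\HOD$ using (\ref{n:e:nonoverlap}) to keep all critical points on the ${\mathcal M}$-side above $\theta_\alpha$ and to rule out protomice, and concluding from soundness and $\rho^{n+1}_{{\mathcal M}}=\theta_\alpha$ that the ${\mathcal M}$-side does not move. The one step you flag as delicate---that the pulled-back strategy $\Sigma^\sigma$ really respects $\Sigma_\alpha$---is exactly where the paper invokes hull condensation: a tree ${\mathcal U}$ appearing on the sequence of the last model of the ${\mathcal M}$-side is a hull of its image under the copy map $\sigma'$, and $\sigma'\in\hod$ since $\sigma\in\hod$, so hull condensation of $\Sigma_\alpha$ gives the required agreement.
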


\begin{proof}
(Sketch.)
Since $\sigma\rst\theta_\alpha=\idm$, the model ${\mathcal M}$ agrees with $\hod$ below
$\theta_\alpha$.

By (\ref{n:e:nonoverlap}), all critical points of the iteration tree on the
${\mathcal M}$-side of the comparison of ${\mathcal M}$ against $\hod$ are strictly larger than
$\theta_\alpha$. The preservation degree of $\sigma$ guarantees that ${\mathcal M}$
is iterable when using extenders with critical points larger than $\theta_\alpha$, so
${\mathcal M}$ can be compared with $\hod$. By the theory developed in Sargsyan \cite{S},
$\hod$ wins the comparison against ${\mathcal M}$; the assumption $\sigma\in\hod$ is used
here. (We sketch the argument below, using freely notation and results form Sargsyan \cite{S}.)
But then ${\mathcal M}$ is not moved in the coiteration, as it projects to
$\theta_\alpha$, is sound, and all critical points on the ${\mathcal M}$-side are
larger than $\theta_\alpha$. This gives the result. This is an instance of a more general result from
Sargsyan \cite{S}, namely, that $\hod$ thinks that it is {\em full}: Letting
$\lambda^{\hod}$ denote the order type of the set of Woodin cardinals in $\HOD$ and their limits, if
$\alpha<\lambda^{\hod}$, and $\eta\in[\theta_\alpha,\theta_{\alpha+1})$ is a cutpoint, then any sound
$\Sigma_\alpha$-mouse ${\mathcal M}$ over $\hod\res\eta$ with $\rho({\mathcal M})=\eta$ {\em is} an
initial segment of $\hod$.

To see that $\hod$ wins the comparison, let $\Sigma$ be the strategy of ${\mathcal N}$, so $\Sigma$
respects $\Sigma_\beta$, the iteration strategy for $\hod\res\theta_\beta$ in $\hod$. By the arguments of
Sargsyan \cite{S}, all we need to check is that ${\mathcal M}$ is a
$\Sigma_\alpha$-premouse and $\Sigma^\sigma$ respects $\Sigma_\alpha$. This
follows from hull condensation, and the former is immediate.

\vspace{-3mm}
\begin{equation} \label{displayedeqn}
\begin{diagram}
\node{{\mathcal N}}
\arrow{e,t}{{{\mathcal T}^\sigma}}
\node{{\mathcal N}'}\\
\node{{\mathcal M}}
\arrow{n,l}{{\sigma}}
\arrow{e,b}{{\mathcal T}}
\node{{\mathcal M}'}
\arrow{n,r}{{\sigma'}}
\end{diagram}
\end{equation}

To see this, suppose that ${\mathcal T}$ is the tree arising on the ${\mathcal M}$-side of the
comparison, as in diagram (\ref{displayedeqn}), so ${\mathcal T}$ has critical point above
$\theta_\alpha$. Let ${\mathcal U}$ be on the sequence of ${\mathcal M}'$. We must argue that
${\mathcal U}$ is according to $\Sigma_\alpha$. We use $\sigma'$ to produce $\sigma'({\mathcal U})$,
and note that the pointwise image $\sigma'[{\mathcal U}]$ is a hull of $\sigma'({\mathcal U})$. But then
${\mathcal U}$ is also a hull of $\sigma'({\mathcal U})$, and since $\sigma\in\hod$, so is
$\sigma'\in\hod$. This allows us to invoke hull condensation, as claimed.
\end{proof}

In the following we treat the case where $\Theta=\theta_\Omega$ for some
limit ordinal $\Omega$ which is not Mahlo in $\hod$. The remaining
case, namely when $\Theta$ is of the form
$\theta_{\alpha+1}$, is discussed at the end of this section.

We modify the construction in Zeman \cite{gls} to the current context to
obtain a global square sequence below $\Theta$ in $\hod$ of the form
$\langle C_\tau\mid\tau\in C^*\rangle$ where $C^*\in\hod$ is a closed
subset of $\Theta$ fixed in advance such that
\begin{itemize}
\item $\theta_\tau=\tau$ and $\tau$ is singular in $\hod$ whenever 
      $\tau\in C^*$; 
\item $\cof(\tau)=\omega$ whenever $\tau$ is a successor point of $C^*$. 
\end{itemize}
Notice that the latter can be arranged by replacing $C^*$ with $\lim(C^*)$. 

In the rest of this section, we describe the modifications to the construction in
Zeman \cite{gls} that yield the sequence $\langle C_\tau\mid\tau\in
C^*\rangle$.
The point of our description is to separate aspects of the construction that
can be accomplished by abstract fine structural considerations from those that
are specific to strategic extender models. By ``abstract fine structural
considerations'', we mean here methods within the framework of Zeman
\cite[Chapter 1]{imlc}.  Our notation is consistent with that in Zeman \cite{gls} with two
exceptions: First, in our case $\Theta$ plays the same role the class of ordinals
plays in Zeman \cite{gls}, and second, the sets $C_\tau$ from Zeman \cite{gls} do not
correspond to the sets denoted by $C_\tau$ here.

The construction in Zeman \cite{gls} is carried out separately
on two disjoint sets $\eS^0,\eS^1$. The class $\eS^1$ consists of all those
ordinals for which the singularizing structure is a protomouse. In our case,
protomice do not arise in the construction due to (\ref{n:e:nonoverlap}), so
we have $\eS^1=\varnothing$. This greatly simplifies the situation, as the
verification that $C_\tau\subseteq\eS^i$ whenever $\tau\in\eS^i$ ($i=0,1$)
and $\otp(C_\tau)>\omega$, involved a substantial amount of work in Zeman \cite{gls}.
Thus we will refer only to the portion of the construction in Zeman \cite{gls}
that concerns the set $\eS^0$.

To each $\tau\in C^*$, we assign the singularizing level of $\hod$ for
$\tau$, which we denote by ${\mathcal N}_\tau$. We then define the auxiliary
objects for ${\mathcal N}_\tau$ exactly as in Zeman \cite{gls}:
\begin{itemize}
\item
By $\tilde{h}^k_\tau$, we denote the uniform $\Sigma^{(k-1)}_1$-Skolem function for
${\mathcal N}_\tau$; this is a partial function from $\omega\times{\mathcal N}_\tau$ into
${\mathcal N}_\tau$.
\item
We write $\tilde{h}^k_\tau(\gamma\cup\{p\})$ to denote the set of all
values $\tilde{h}^k_\tau(i,\langle\xi,p\rangle)$, where $i\in\omega$ and
$\xi<\gamma$.
\item
We let $p_\tau$ be the standard parameter of ${\mathcal N}_\tau$.
\item
We let $n_\tau$ be the complexity degree of a singularizing function for $\tau$ over
${\mathcal N}_\tau$, or equivalently the least $n$ such that
$\tilde{h}^{n+1}_\tau(\gamma\cup\{p_\tau\})$ is cofinal in $\tau$ for some
$\gamma<\tau$.
\item
$\tilde{h}_\tau=\tilde{h}^{n_\tau+1}_\tau$
\item
We let $\alpha_\tau$ be the largest $\alpha<\tau$ such that
$\tilde{h}_\tau(\alpha\cup\{p_\tau\})\cap\tau=\alpha$.
\end{itemize}

We then define the sets $B_\tau$, which are the first approximations to
$C_\tau$, analogously as in Zeman \cite{gls}. Recall that $B_\tau$ may be
bounded in $\tau$ and even empty if $\tau$ is countably cofinal, but on the
other hand $B_\tau$ will be ``almost'' coherent also at successor points.

To be precise: Recall that an ordinal $\zeta$ is in $p_\tau$ if and only if some generalized solidity
witness for $\zeta$ with respect to ${\mathcal N}_\tau$ and $p_\tau$ is an element of
${\mathcal N}_\tau$, and the standard solidity witness for $\zeta$ can be reconstructed
from any generalized solidity witness for $\zeta$ inside ${\mathcal N}_\tau$. Here, by the
\emph{standard} solidity witness, we mean the transitive
collapse of the hull $\tilde{h}^{k+1}_\tau(\zeta\cup\{p_\tau-(\zeta+1)\})$, where
$\rho^{k+1}_{{\mathcal N}_\tau}\le\zeta<\rho^k_{{\mathcal N}_\tau}$.

Recall also that premice may be either passive or active, and active premice
are of three types, depending on the set of generators of the top
extenders. When we talk about embeddings between premice, we always require that
the premice in question are of the same kind according to this
categorization. We will say the premice are of the same type also in the case
where both are passive.

Now let $\bar{\tau}\in B_\tau$ if and only if the following hold:
\begin{enumerate}
\item
${\mathcal N}_{\bar{\tau}}$ is a strategic premouse of the same type as
${\mathcal N}_\tau$,
\item
$n_{\bar{\tau}}=n_\tau$, and
\item
There is a $\Sigma^{(n)}_0$-preserving
embedding $\sigma:{\mathcal N}_{\bar{\tau}}\to {\mathcal N}_\tau$ such that:
\begin{itemize}
\item
$\sigma\rst\bar{\tau}=\idm$,
\item
$\sigma(\bar{\tau})=\tau$ if $\bar{\tau}\in{\mathcal N}_{\bar{\tau}}$,
\item
$\sigma(p_{\bar{\tau}})=p_\tau$ and,
\item
For each $\xi\in p_\tau$, there is some generalized solidity witness $Q$ for
$\xi$ with respect to ${\mathcal N}_\tau$ and $p_\tau$, such that
$Q\in\rng(\sigma)$.
\end{itemize}
\item
$\alpha_{\bar{\tau}}=\alpha_\tau$.
\end{enumerate}

The following facts are proved by means of abstract fine structure theory, and
the proofs look exactly as the corresponding proofs in Zeman \cite{gls}:
\begin{itemize}
\item[(A)] $\sigma$ is unique, and we denote it by $\sigma_{\bar{\tau},\tau}$.
\item[(B)] $\sigma_{\bar{\tau},\tau}[\omega\rho^{n_{\bar{\tau}}}_{{\mathcal N}_{\bar{\tau}}}]$ is
                bounded in $\omega\rho^{n_\tau}_{N_\tau}$.
\item[(C)] If $\tau^*<\bar{\tau}$ are in $B_\tau$ then there is a unique map
         $\sigma_{\tau^*,\bar{\tau}}:{\mathcal N}_{\tau^*}\to{\mathcal
	 N}_{\bar{\tau}}$ with
	 the list of properties (1) -- (4) stipulated above, and with
	 $(\tau^*,\bar{\tau})$ in place of $(\bar{\tau},\tau)$.
\item[(D)] If $\tau^*<\bar{\tau}<\tau'$ are in $B_\tau\cup\{\tau\}$, then
       $\sigma_{\bar{\tau},\tau'}\circ\sigma_{\tau^*,\bar{\tau}}=
         \sigma_{\tau^*,\tau'}$.
\item[(E)] If $\bar{\tau}\in B_\tau$, then $B_\tau\cap\bar{\tau}=
                    B_{\bar{\tau}}-\min(B_\tau)$.
\end{itemize}

The proof of unboundedness of $B_\tau$, for $\tau$ of uncountable cofinality, is
similar to that in Zeman \cite{gls}, but uses the Condensation Lemma
\ref{n:l:condensation} where the argument from Zeman \cite{gls} used
condensation for $\Le$-models:

For $\lambda$ regular and sufficiently large, given $\tau'<\tau$, and working in
$\hod$, we construct a countable elementary substructure $X\prec H_\lambda$,
such that
 $$ {\mathcal N}_\tau,\tau',C^*\in X. $$
Letting $\tilde{\tau}=\sup(X\cap\tau)$, notice that $\tau'<\tilde{\tau}<\tau$ and $\tilde{\tau}\in C^*$, as
$C^*$ is closed. Let $\bar{{\mathcal N}}$ be the transitive collapse of ${\mathcal N}_\tau$, and let
$\sigma:\bar{{\mathcal N}}\to{\mathcal N}_\tau$ be the inverse of the collapsing isomorphism.
Set $\tilde{{\mathcal N}}=\ult^{n_\tau}(\bar{{\mathcal N}},\sigma\rst(\bar{{\mathcal N}}\res\bar{\tau}))$,
where $\sigma(\bar{\tau})=\tau$, and let $\sigma':\tilde{{\mathcal N}}\to{\mathcal N}_\tau$ be the
factor map, that is, $\sigma'\circ\tilde{\sigma}=\sigma$, and
$\sigma'\rst\tilde{\tau}=\idm$.

Exactly as in Zeman \cite{gls}, one can show the
following facts by means of abstract fine structural considerations:
\begin{itemize}
\item[(F)] $\tilde{\sigma}$ is $\Sigma^{(n)}_0$-preserving, and maps
               $\omega\rho^{n_\tau}_{\bar{{\mathcal N}}}$ cofinally into
	       $\omega\rho^{n_\tau}_{\tilde{{\mathcal N}}}$. Similarly,
	       $\sigma'$ is
	       $\Sigma^{(n)}_0$-preserving, but maps
               $\omega\rho^{n_\tau}_{\tilde{\mathcal N}}$ boundedly into
	       $\omega\rho^{n_\tau}_{{\mathcal N}}$.
\item[(G)] $\tilde{{\mathcal N}}$ is a sound and solid strategic premouse.
\item[(H)] $\tilde{{\mathcal N}}$ is a singularizing structure for $\tilde{\tau}$ with
               singularization degree $n_\tau$.
\item[(I)] $\sigma'(\tilde{\tau})=\tau$, $\sigma'(p_{\tilde{{\mathcal N}}})=p_\tau$, and
              $\alpha_\tau$ is the largest $\alpha<\tilde{\tau}$ satisfying
	      \[
              \tilde{h}^{n_\tau+1}_{\tilde{{\mathcal N}}}(\alpha\cup\{p_{\tilde{{\mathcal N}}}\})
                \cap\tilde{\tau}=\alpha.
              \]
\end{itemize}

Since the entire construction took place inside $\hod$,
Lemma~\ref{n:l:condensation} can be applied to the map
$\sigma':\tilde{{\mathcal N}}\to{\mathcal N}_\tau$. The rest follows again by abstract fine
structural considerations, literally as in Zeman \cite{gls}. In particular,
these considerations can be used to show that
$\tilde{{\mathcal N}}={\mathcal N}_{\tilde{\tau}}$ and $\sigma'(p_{\tilde{{\mathcal N}}})=p_\tau$, and
hence $\sigma'=\sigma_{\tilde{\tau},\tau}$. Additionally,
$\alpha_{\tilde{\tau}}=\alpha_\tau$. This shows that
$\tilde{\tau}\in B_\tau$.

In Zeman \cite{gls} it is only proved that $B_\tau$ is closed on a
tail-end; this was again caused by the fact that one has to consider
protomice. Here, we prove that $B_\tau$ is itself closed. 

Given a limit point $\tilde{\tau}$ of $B_\tau$, notice first that $\tilde{\tau}\in C^*$, let
$\tilde{{\mathcal N}}$ be the direct limit of
$\langle{\mathcal N}_{\bar{\tau}},\sigma_{\tau^*,\bar{\tau}}\mid
  \tau^*<\bar{\tau}<\tilde{\tau}\rangle$,
and let $\sigma':\tilde{{\mathcal N}}\to{\mathcal N}_\tau$ be the direct limit map. Again, by
abstract fine structural considerations that are essentially identical to those in Zeman
\cite{gls}, we establish the conclusions analogous to (F)--(I) for the
current version of $\tilde{{\mathcal N}}$ and $\sigma'$, and then as above apply
Lemma~\ref{n:l:condensation} to $\sigma':\tilde{{\mathcal N}}\to{\mathcal N}_\tau$, and
conclude that $\tilde{{\mathcal N}}$ is an initial segment of $\hod$. As above, we then
conclude that $\tilde{{\mathcal N}}={\mathcal N}_{\tilde{\tau}}$ and
$\sigma'=\sigma_{\tilde{\tau},\tau}$.

Having established closure and unboundedness of $B_\tau$, we follow the
construction from Zeman \cite{gls}, and obtain fully coherent sets $B^*_\tau$
by ``stacking'' the sets $B_\tau$; so
 $$ B^*_\tau=B_{\nu_0}\cup B_{\nu_1}\cup\dots\cup B_{\nu_{\ell_\tau}}, $$
where $\nu_0=\tau$, $\nu_{i+1}=\min(B_{\nu_i})$, and $\ell_\tau$ is the
least $\ell$ such that $B_{\nu_{\ell+1}}=\varnothing$. We then let $C^*_\tau$
be the set of all ordinals $\tau_\iota$, defined inductively as follows:
\begin{eqnarray*}
\tau_0&=&\min(B_\tau),\\
\xi^\tau_\iota&=&\mbox{the least $\xi<\tau$ such that
$\tilde{h}_\tau(\{\xi\}\cup\{p_\tau\})
\not\subseteq\rng(\sigma_{\tau_\iota,\tau})$},\\
\tau_{\iota+1}&=&\mbox{the least $\bar{\tau}<\tau$ such that
$\tilde{h}_\tau(\{\xi^\tau_\iota\}\cup\{p_\tau\})
\subseteq\rng(\sigma_{\bar{\tau},\tau})$},\\
\tau_\iota&=&\sup\{\tau_{\bar{\iota}}\mid\bar{\iota}<\iota\}\mbox{ for limit }\iota.
\end{eqnarray*}

The proof that the $C^*_\tau$ are fully coherent, unbounded in $\tau$ whenever
$\tau$ has uncountable cofinality in $\hod$, and $\otp(C^*_\tau)<\tau$, can be
carried out using abstract fine structural considerations, and is essentially the same as
the argument in Zeman \cite{gls}.

Finally, we let $C_\tau=\lim(C^*_\tau)$ whenever
$\lim(C^*_\tau)$ is unbounded in $\tau$, and otherwise let $C_\tau$ be some randomly
chosen cofinal $\omega$-sequence in $\tau$. (Recall again that here
our notation diverges from that in Zeman \cite{gls}, where $C_\tau$ denoted sets that
are fully coherent but not necessarily cofinal at countably cofinal $\tau$.)
\end{proof}

\begin{remark}
Above, we worked in the theory $\ADR$, as it directly relates to our negative results.
However, as long as we are in the situation where there is no $\Gamma\subseteq{\mathcal P}(\R)$ such that
$\mathrm{L}(\Gamma,\R)\models\ADR + \mbox{\rm``$\Theta$ is Mahlo in $\hod$''}$, Theorem \ref{hyplowerbound} holds
assuming only $\AD^+$.

We briefly sketch the argument in the case where $\Theta=\theta_{\Omega+1}$
for some ordinal $\Omega$. In this case we use the following facts. 
\begin{enumerate}
\item[(a)] Models that admit the $\hod$ analysis are of the form
           $\Lp^\Sigma({\mathbb R})$. 
\item[(b)] Under our minimality assumption, the $\hod$ analysis applies to our
           universe $\V$. 
\end{enumerate}
That (a) and (b) hold follows from \cite{SarSteel}. 
Assuming them, let $G$ be $\pmax$-generic over $\mbfV$. By the $S$-construction from Section 2.11 of Sargsyan \cite{S}, the
model $\Lp^\Sigma({\mathbb R})[G]$ can
be rearranged into the form $\Lp^\Sigma({\mathbb R},G)$. Since $G$ well-orders
${\mathbb R}$ in order type $\omega_2$, there is $A\subseteq\omega_2$ such that
$\Lp^\Sigma({\mathbb R},G)=\Lp(A)$. The standard construction of the canonical
$\square_{\omega_2}$-sequence in $\Lp(A)$ thus yields a
$\square_{\omega_2}$-sequence in the $\pmax$-extension.
\end{remark}

Suppose now that we have a sequence 
$\langle c_\tau\mid\tau\in C^*\rangle$ as in Theorem~\ref{hyplowerbound}, and 
that the Axiom of Choice holds. We may assume in addition that $c_\tau$ consists of
successor ordinals whenever $c_\tau\not\subseteq C^*$. It is a well-known fact 
that if $\alpha<\beta$ are ordinals
in the interval $(\omega_2,\omega_3)$ then there is a coherent sequence 
$\langle d_\tau\mid\tau\in\lim\cap(\alpha,\beta)\rangle$ such that each
$d_\tau$ is a closed unbounded subset of $\tau$ contained in the interval
$(\alpha,\beta)$ and has order-type at most $\omega_2$. (This is proved by
induction on the length of initial segments of such a sequence.) Using Choice 
we can pick such a sequence $\langle d_\tau\rangle_\tau$ for
each interval $(\alpha,\beta)$ where $\alpha<\beta$ are two adjacent elements
of $C^*$, and assemble them together with the sequence 
$\langle c_\tau\mid\tau\in C^*\rangle$ into a single coherent sequence
$\langle c'_\tau\mid\tau\in\lim\cap(\omega_2,\omega_3)\rangle$. This  sequence
is almost a $\square_{\omega_2}$-sequence, except that it satisfies a bit 
more relaxed restriction on order-types, namely $\otp(c'_\tau)<\tau$ for every
limit ordinal $\tau\in(\omega_2,\omega_3)$. For auxiliary purposes, set also
$c'_\tau=\tau$ whenever $\tau\le\omega_2$ is a limit ordinal. It is then easy
to verify that the following recursive construction from Jensen \cite{J} turns
the sequence $\langle c'_\tau\mid\tau\in\lim\cap(\omega_2,\omega_3)\rangle$
into a  $\square_{\omega_2}$-sequence
$\langle c^*_\tau\mid\tau\in\lim\cap(\omega_2,\omega_3)\rangle$. Let  
$\pi_\tau:\otp(c'_\tau)\to c'_\tau$ be the unique order isomorphism. Then set   
\[
c^*_\tau=\pi_\tau[c^*_{\otp(c'_\tau)}]. 
\]
As the partial order $\pmax * \Add(\Theta, 1)$ forces Choice, this argument gives the following corollary of Theorem 
\ref{hyplowerbound}. 

\begin{corollary}\label{hyplowerboundcor}
Assume that 
$\ADR$ holds
and that there is no $\Gamma \subseteq \mathcal{P}(\mathbb{R})$ such that
$\mathrm{L}(\Gamma, \mathbb{R}) \models \text{\rm``$\Theta$ is Mahlo in
$\HOD$"}$.  Then $\square_{\omega_{2}}$ holds after forcing with 
$\pmax * \Add(\Theta, 1)$. 
\end{corollary}

\section{Choiceless extensions where square fails} \label{section:nochoice}

In this section we present two results showing that $\omega_3$ is square inaccessible, and even
threadable (see Definition
\ref{def:threadable}), in the $\pmax$ extension of suitable models of determinacy. These arguments do not show that the subsequent forcing
$\Add(\omega_{3},1)$, which adds a Cohen subset of $\omega_3$, and in the process well-orders
$\mathcal{P}(\mathbb{R})$, does not (``accidentally'') add a square sequence. In fact it can, as the hypotheses 
of Theorems \ref{hyplowerbound} and \ref{theorem:gone} are jointly consistent relative to suitable large cardinals. Moreover, the hypothesis of 
Theorem \ref{theorem:gone} (with $\theta$ as $\Theta$) follows from
$\AD^{+}$ + $\Unif$ + $\V\mathord{=}\mathrm{L}(\mathcal{P}(\R)) + ``\Theta \text{ is Mahlo in }\HOD$'' (and is strictly weaker than it, if it is consistent).
To see this, note that the Solovay sequence is closed, and, by Remark \ref{nowoname}, $\AD^{+}$ + $\Unif$ implies that the Solovay sequence has limit length. 
Finally, an application of Theorem \ref{voplem} gives a model of the form $L_{\kappa}(\mathcal{P}_{\theta_{\alpha}}(\mathbb{R}))$ satisfying the hypothesis of 
Theorem \ref{theorem:gone}.

Given a
set $S$ of ordinals, let us say that a sequence of sets 
$\langle x_\xi\mid\xi\in S\rangle$ is a \emph{singularizing sequence} for $S$ if
each $x_\xi$ is a cofinal subset of $\xi$ of order-type strictly smaller than
$\xi$. 

\begin{theorem} \label{theorem:gone}
Assume that $\AD^{+}$ holds, and that $\theta$ is a limit element of the Solovay sequence. Let $R_{\theta}$ be the set of  $\kappa<\theta$ which are 
both limit elements of the Solovay sequence and regular in $\HOD$. 
Assume that $R_{\theta}$ is unbounded in $\theta$, and let $R\subseteq R_\theta$ be an unbounded subset of
$\theta$ in $\HOD$. Then there does not exist a singularizing sequence for $R$ in the
$\pmax$ extension of $\HOD_{\mathcal{P}_{\theta}(\mathbb{R})}$.  

In particular, $\omega_3$ is square inaccessible in the $\pmax$ extension of
$\HOD_{\mathcal{P}_{\theta}(\mathbb{R})}$.
\end{theorem}

\begin{proof}
In $\HOD_{\mathcal{P}_{\theta}(\mathbb{R})}$ fix a $\pmax$-name $\sigma$ for a
singularizing sequence for $R$. Fix $\kappa\in R$ such that $\sigma$ is ordinal
definable from some $A\in{\mathcal P}_\kappa(\R)$. By Theorem \ref{regpres}, 
$\kappa$ is regular in $\HOD_{\mathcal{P}_{\kappa}(\mathbb{R})}$. The canonical $\pmax$-name  
\[
\dot{x}_\kappa=\{(p,\check{\alpha})\mid
p\Vdash\alpha\in\sigma_{\check{\kappa}}\} 
\]
for the member of the interpretation of $\sigma$ associated with $\kappa$
is definable from $\sigma$ and $\kappa$, and therefore belongs to
$\HOD_{\mathcal{P}_{\kappa}(\mathbb{R})}$. 

Then $x_\kappa$, the interpretation of $\dot{x}_\kappa$, belongs to the
$\pmax$ extension of 
$\HOD_{\mathcal{P}_{\kappa}(\mathbb{R})}$. But this is a contradiction, since,
on the one hand, $x_\kappa$ is unbounded in $\kappa$ and has order type less
than $\kappa$, and, on the other hand, the regularity of $\kappa$ is preserved
in the $\pmax$ extension of $\HOD_{\mathcal{P}_{\kappa}(\mathbb{R})}$, as
$|\pmax|={\mathfrak c}$. 

The square-inaccessibility of $\omega_3$ in the $\pmax$ extension of
$\HOD_{\mathcal{P}_{\theta}(\mathbb{R})}$ follows from the fact that any
square sequence witnessing $\square_{\omega_3}$ induces a singularizing
sequence for $R_\theta$. 
\end{proof}

\begin{remark}\label{add-square}
Let us emphasize that the negative result in Theorem~\ref{theorem:gone} is a
negative result on the existence of a singularizing sequence rather than a
negative result on square sequences. The contradiction in the proof of the theorem is
obtained not from a combinatorial situation that would block the existence of
a square sequence, rather this contradiction exhibits the lack of choice in
the $\pmax$ extension: Every $\kappa\in R_\theta$ is singular, but we cannot
pick a singularizing sequence.  The coherence requirement does not play any
role here. 
\end{remark} 

An uncountable cardinal $\theta$ is \emph{weakly compact} if 
for every $S \subset \mathcal{P}(\theta)$ of cardinality $\theta$ there is a
$\theta$-complete filter $F$ on $\theta$ such that $\{ A, \theta \setminus A\} \cap F \neq \emptyset$ for
each $A \in S$ (this is not the usual definition, but it easily seen to be equivalent to the Extension Property in 
Theorem 4.5 of \cite{kanamori}). It follows easily from this formulation (or, more directly, the Extension Property)
that $\theta$ is threadable if it is weakly compact.

\begin{theorem} \label{theorem:weaklycompact}
Assume that $\AD^{+}$ holds, and that $\theta$ is a limit element of the Solovay sequence, and weakly compact
in $\HOD$. Then $\theta$ remains weakly compact in the $\pmax$ extension of
$\HOD_{\mathcal{P}_{\theta}(\mathbb{R})}$.
\end{theorem}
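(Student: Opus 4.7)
The plan is to apply the partition characterization of weak compactness stated just before the theorem: given any coloring $f\colon[\theta]^{2}\to 2$ in $W[G]$, where $W=\HOD_{\mathcal{P}_{\theta}(\mathbb{R})}$ and $G$ is $\pmax$-generic over $W$, I will produce a homogeneous set $A\in[\theta]^{\theta}$ already in $W[G]$. The strategy is to absorb $f$ into a set-generic extension of $\HOD$ by a forcing of $\HOD$-cardinality less than $\theta$, where the weak compactness of $\theta$ is directly inherited from $\HOD$.

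To begin, fix a $\pmax$-name $\dot f\in W$ for $f$, coded as a single subset of $\theta$ in $W$ (possible because $|\pmax|\le\mathfrak c<\theta_{0}\le\theta$ under $\AD$). Exactly as in the opening of the proof of Theorem \ref{getmeas}, every element of $W$ is OD from a set of reals of Wadge rank less than $\theta$ and, in light of Remark \ref{Solovayremark} together with Suslinness in $W$, from a bounded subset $E\subseteq\xi$ of $\theta$ with $\xi<\theta$. Because $\theta$ is a limit of the Solovay sequence, the least Solovay member $\theta^{*}$ above $\xi$ still satisfies $\theta^{*}<\theta$. Lemma \ref{voplem} then furnishes a complete Boolean algebra $\mathbb B\in\HOD$ of $V$-cardinality at most $\theta^{*}$ and a $\HOD$-generic $H\subseteq\mathbb B$ with $\HOD_{E}=\HOD[H]$, so in particular $\dot f\in\HOD[H]$.

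The crucial step is to propagate the weak compactness of $\theta$ from $\HOD$ through the product forcing $\mathbb B\times\pmax$. Since members of the Solovay sequence are cardinals in $\HOD$, $\theta^{*}$ is a $\HOD$-cardinal and $|\mathbb B|^{\HOD}\le\theta^{*}<\theta$; since $\pmax\in\HOD$ (being ordinal definable) has rank below $\omega_{1}+1<\theta$, the product $\mathbb B\times\pmax$ has $\HOD$-cardinality less than $\theta$. The standard embedding-lifting proof of the preservation of weak compactness under small forcing now applies: pick any transitive $M\subseteq\HOD$ of size $\theta$ closed under sequences of length less than $\theta$ with $\mathbb B,\pmax,\dot f\in M$, take an elementary $j\colon M\to N$ in $\HOD$ with critical point $\theta$ (available by weak compactness of $\theta$ in $\HOD$), and use that $j$ fixes both $\mathbb B$ and $\pmax$ to lift $j$ through the forcing. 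As $G$ is automatically $\HOD[H]$-generic, it follows that $\theta$ remains weakly compact in $\HOD[H][G]$, and since $f=\dot f_{G}\in\HOD[H][G]\subseteq W[G]$, the partition property at $\theta$ inside $\HOD[H][G]$ delivers a homogeneous $A\in[\theta]^{\theta}$ that lies in $W[G]$.

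The main obstacle I anticipate is the cardinality bound $|\mathbb B|^{\HOD}<\theta$. Lemma \ref{voplem} yields only $|\mathbb B|^{V}\le\theta^{*}$, and one must verify that this transfers to $\HOD$; this reduces to $\theta^{*}$ being a cardinal in $\HOD$, a standard consequence of the inner model theory of $\HOD$ under $\AD^{+}$. A secondary point, handled via Suslin representations in $W$ and the Coding Lemma exactly as in the proof of Theorem \ref{getmeas}, is the passage from ``$\dot f$ is OD from a set of reals of Wadge rank less than $\theta$'' to ``$\dot f$ is OD from a bounded subset of $\theta$''.
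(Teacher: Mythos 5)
Your strategy founders on a point that the paper's proof is specifically engineered to avoid: $\pmax$ is \emph{not} an element (or subset) of $\HOD$, nor of $\HOD[H]=\HOD_E$. Its conditions are countable structures coded by arbitrary reals, and under $\AD$ most reals are not ordinal definable from any set of ordinals; being ordinal definable as a class does not put a set into $\HOD$ unless its members are hereditarily $\OD$. Consequently the product $\mathbb{B}\times\pmax$ is not a forcing notion in $\HOD$, a $\pmax$-name $\dot f$ (whose transitive closure contains arbitrary conditions) does not lie in $\HOD[H]$, and $G$ is not ``automatically $\HOD[H]$-generic'' --- there is no poset of $\HOD[H]$ for it to be generic for. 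The entire small-forcing lifting argument therefore has nowhere to run. (A related problem: coding $\dot f$ ``as a single subset of $\theta$'' in $W$ would require well-ordering the conditions, i.e.\ the reals, which $W$ cannot do.)

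The paper's proof circumvents this by never moving the name itself into $\HOD$. It works with the forcing relation: for each condition $p$ (coded by a real $a$) and each $\alpha<\theta$ it considers the set $A_{p,\alpha}$ of ordinals \emph{forced} by $p$ into the $\alpha$-th set, and uses Lemma \ref{voplem} so that every $C(a,S)$ is individually $\HOD$-generic for one fixed small $Q\in\HOD$. This converts the data into a $\theta$-sequence of $Q$-names lying in $\HOD$, to which weak compactness of $\theta$ \emph{in} $\HOD$ applies, producing a $\theta$-complete filter $F$ that (since $|Q|<\theta$ in $\HOD$) measures every $A_{p,\alpha}$. The remaining, and essential, step is passing from the forced sets to their realizations in $W[G]$: these are unions and intersections indexed by the $\mathbb{R}$-many conditions in $G$, and $\theta$-completeness alone cannot handle an index set that is not well-orderable. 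This is exactly where Theorem \ref{getmeas} ($\mathbb{R}$-completeness of $\theta$-complete filters at limits of the Solovay sequence) is invoked, and your proposal contains no substitute for it. Note also that the paper uses the filter characterization of weak compactness rather than the partition one; the partition formulation would face the same obstruction, since the coloring in $W[G]$ is only determined by the generic filter and not by any single small-generic object over $\HOD$.
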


\begin{proof}
Let $\tau$ be a $\pmax$ name in $\HOD_{\mathcal{P}_{\theta}(\mathbb{R})}$ for a collection of $\theta$
many subsets of $\theta$. Then $\tau$ is ordinal definable in $\HOD_{\mathcal{P}_{\theta}(\mathbb{R})}$ from a 
set of reals that is itself definable from a bounded subset $S$ of $\theta$. For each $\pmax$ condition 
$p$ and each ordinal $\alpha < \theta$, let $A_{p,\alpha}$ be the set of ordinals forced by $p$ to be in 
the $\alpha$-th set represented by $\tau$.

Fix an ordinal $\delta < \theta$ such that $S \subseteq \delta$, and let
 $$ C \colon \mathcal{P}(\omega) \times \mathcal{P}(\delta) \to \mathcal{P}(\omega + \delta) $$
be defined by letting $C(a,b) \cap \omega = a$, and, for all $\gamma < \delta$, $\omega + \gamma \in
C(a,b)$ if and only if $\gamma \in b$. Fix also a coding of elements of $H(\aleph_{1})$ by subsets of
$\omega$.

By Lemma \ref{voplem}, there is a partial order $Q$ of cardinality less than $\theta$ in $\HOD$
such that for each $a\subseteq \omega$, $C(a,S)$ is $\HOD$-generic for $Q$. The proof of Lemma 
\ref{voplem} shows that there is a sequence $\langle \sigma_{\alpha} \mid \alpha < \theta \rangle$ in 
$\HOD$, consisting of $Q$-names, such that for each $\alpha < \theta$ and each $a \subseteq \omega$ 
coding some $p \in \pmax$, the realization of $\sigma_{\alpha}$ by $C(a,S)$ is the set $A_{p,\alpha}$.
To see this, following the argument from Lemma \ref{voplem}, let $T$ be an ordinal definable relation on
$\mathcal{P}(\omega + \delta) \times \theta \times \theta$ such that for all $a \subseteq \omega$ coding
$p \in \pmax$ and all $(\alpha, \beta) \in \theta \times \theta$, we have that $\beta \in A_{p,\alpha}$ if and
only if $T(C(a,S), \alpha, \beta)$. Define $T^{*}$ on $\theta \times\theta \times Q$ by letting
$T^{*}(\alpha, \beta, q)$ hold if and only if $T(D, \alpha, \beta)$ holds for all $D \in h(q)$, where $h$ is the
function from the proof of Lemma \ref{voplem}. Then $T^{*}$ is (essentially) the desired sequence 
$\langle \sigma_{\alpha} : \alpha < \theta\rangle$. 

Applying the weak compactness of $\theta$ in $\HOD$, we can find in $\HOD$ a $\theta$-complete filter $F$ on
$\theta$, such that, for each $\alpha < \theta$ and each $q \in Q$, $F$ contains either the set
$\{ \beta < \theta \mid q \forces \check{\beta} \in \sigma_{\alpha}\}$, or its complement. Since $|Q| < \theta$
in $\HOD$, for each $p \in \pmax$ and each $\alpha < \theta$, either $A_{p, \alpha}$ or its complement contains a set
in $F$. By Theorem \ref{getmeas}, the filter generated by $F$ is $\mathbb{R}$-complete in
$\HOD_{\mathcal{P}_{\theta}(\mathbb{R})}$. It follows then that the filter generated by $F$ measures all the sets in the
realization of $\tau$ in the $\pmax$ extension.
\end{proof}

\section{Forcing the square inaccessibility of $\omega_3$} \label{section:omega2}

A \emph{partial} $\square_{\kappa}$-\emph{sequence}
is a sequence $\langle C_{\alpha} \mid \alpha \in A \rangle$ (for $A$ a subset of $\kappa^{+}$)
satisfying the three conditions in Definition \ref{squaresubkappa} for each $\alpha \in A$. Note that
condition (\ref{subcoherence}) implies that $\beta \in A$ whenever $\alpha \in A$ and $\beta$ is a limit
point of $C_{\alpha}$.

The partial order $\Add(\omega_{3},1)$ adds a subset of $\omega_{3}$ by initial segments.
When $\mathfrak{c} = \aleph_{2}$, as in a $\pmax$ extension, $\Add(\omega_{3},1)$ well-orders
$\mathcal{P}(\mathbb{R})$ in order type $\omega_{3}$. 

The hypotheses of Theorem \ref{theorem:sqcofoo} below imply
that $\Theta$ is regular, since any singularizing function would exist in $\HOD_{A}$ for some set of reals
$A$, and, by Theorem \ref{regpres}, this would give a club of singular cardinals in $\HOD$ below
$\Theta$.
Modulo the $\HOD$ analysis (and the assumption of $\ADR$ + $\mathrm{V} = \mathrm{L}(\mathcal{P}(\mathbb{R}))$),
the hypothesis of Theorem \ref{theorem:sqcofoo} is close to the negation of the hypothesis of Theorem \ref{hyplowerbound} (see Remark \ref{remark:sqcofoo}).

\begin{theorem} \label{theorem:sqcofoo}
Assume that $\AD^{+}$ + $\Unif$ holds, that $\V = \mathrm{L}(\mathcal{P}(\mathbb{R}))$, and that
stationarily many elements $\theta$ of cofinality $\omega_{1}$ in the Solovay sequence
are regular in $\HOD$.
Then in the $\pmax * \Add(\omega_{3},1)$-extension there is no
partial $\square_{\omega_2}$-sequence defined
on all points of cofinality at most $\omega_1$.
\end{theorem}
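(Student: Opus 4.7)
I plan to argue by contradiction. Suppose $\vec{C} = \langle C_\alpha \mid \alpha \in A \rangle$ is a partial $\square_{\omega_2}$-sequence in $V[G\ast H]$ whose domain $A$ contains every $\alpha < \omega_3$ with $\cf(\alpha) \leq \omega_1$. Fix a $\pmax \ast \Add(\omega_3, 1)$-name $\dot{\vec{C}} \in V$ for $\vec C$. Since $V = L({\mathcal P}({\mathbb R}))$, $\dot{\vec{C}}$ is OD from some set of reals $B_0$ of Wadge rank $\alpha_0 < \Theta$. Accordingly, for each $\theta$ in the Solovay sequence above $\alpha_0$, the component name $\dot{C}_\theta$ is OD from $B_0$ and $\theta$, so $\dot{C}_\theta \in \HOD_{{\mathcal P}_\theta({\mathbb R})}$. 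By the hypothesis the set $S^*$ of $\theta$ in the Solovay sequence above $\alpha_0$ with $\cf^V(\theta) = \omega_1$ and $\theta$ regular in $\HOD$ is stationary in $\Theta$; by Theorem \ref{regpres}, each such $\theta$ is also regular in $\HOD_{{\mathcal P}_\theta({\mathbb R})}$.

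The core of the argument is then to show, for $\theta \in S^*$, that $C_\theta$ already lies in $\HOD_{{\mathcal P}_\theta({\mathbb R})}[G][H \upharpoonright \theta]$, and that $\theta$ remains regular in that model. Since $C_\theta$ is a club in $\theta$ of order type $\omega_1 < \theta$, this will be the contradiction. For the \emph{localization}, I would first factor the second-stage forcing as $\Add(\omega_3, 1)^{V[G]} \cong \Add(\theta, 1)^{V[G]} \ast \dot{\mathbb{Q}}$, where $\dot{\mathbb{Q}}$ is $\omega_3$-closed in $V[G][H \upharpoonright \theta]$. Since $\theta^+ \leq \omega_3$ in $V[G]$, the tail $\mathbb{Q}$ is in particular $\theta^+$-closed, and hence adds no new subsets of $\theta$; thus $C_\theta \in V[G][H \upharpoonright \theta]$. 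To further place $C_\theta$ inside the thinner model, I would use that the forcing relation for $\dot{C}_\theta$ is computable in $\HOD_{{\mathcal P}_\theta({\mathbb R})}$ (since $\dot{C}_\theta$ itself is there) together with a Fodor-type pressing-down on $S^*$ assigning to each $\theta$ the Wadge rank of the parameters needed to capture $\dot C_\theta$ as a $\pmax \ast \Add(\theta,1)^{V[G]}$-name. Thinning to a stationary set on which this regressive map is constant, and invoking coherence of $\vec{C}$ at the limit points of $C_\theta$ to pin down the tail-portion of $\dot{C}_\theta$ from $G$ and $H \upharpoonright \theta$, should yield a $\pmax \ast \Add(\theta,1)^{V[G]}$-name $\dot C'_\theta \in \HOD_{{\mathcal P}_\theta({\mathbb R})}$ whose realization under $G$ and $H \upharpoonright \theta$ is exactly $C_\theta$.

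For the \emph{preservation of regularity}, I would observe that $\pmax \in L({\mathbb R}) \subseteq \HOD_{{\mathcal P}_\theta({\mathbb R})}$ has $\HOD_{{\mathcal P}_\theta({\mathbb R})}$-cardinality strictly below $\theta$, so $\pmax$-forcing preserves the regularity of $\theta$; and $\Add(\theta,1)^{\HOD_{{\mathcal P}_\theta({\mathbb R})}[G]}$ is $\theta$-closed and, under the natural GCH-type considerations in that model, $\theta$-cc, hence preserves regularity of $\theta$ as well. Therefore $\theta$ is regular in $\HOD_{{\mathcal P}_\theta({\mathbb R})}[G][H \upharpoonright \theta]$, yet this model contains the cofinal subset $C_\theta$ of $\theta$ of order type $\omega_1 < \theta$---the desired contradiction.

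The hard part will be the localization step. The $\theta^+$-closure of the tail guarantees only that $C_\theta$ ends up in $V[G][H \upharpoonright \theta]$, not that its identification is possible inside the smaller $\HOD_{{\mathcal P}_\theta({\mathbb R})}$-based model; a priori the $\pmax \ast \Add(\omega_3,1)$-name $\dot C_\theta$ could depend on the tail of $H$ in ways invisible to $\HOD_{{\mathcal P}_\theta({\mathbb R})}[G][H\upharpoonright\theta]$. Ruling this out is where the coherence of the partial square sequence and the stationarity of $S^*$ must interact via a reflection/pressing-down argument, and this is the combinatorial heart of the proof.
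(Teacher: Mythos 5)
Your overall strategy --- pick $\theta$ in the stationary set with the name definable from a set of reals of Wadge rank below $\theta$, and derive a contradiction by placing the club $C_\theta$ (which exists because $\cf(\theta)=\omega_1$, and has order type at most $\omega_2<\theta$) inside a model where $\theta$ is regular --- targets the right contradiction, and you correctly flag the localization step as the crux. But that step is a genuine gap, and the tools you propose do not close it. First, the factorization $\Add(\omega_3,1)^{V[G]}\cong\Add(\theta,1)^{V[G]}*\dot{\mathbb Q}$ is not correct: conditions of $\Add(\omega_3,1)^{V[G]}$ of length at least $\theta$ are dense, so $H\cap\theta$ is decided by a single condition and carries no genericity over $V[G]$; what is true is simply that $\Add(\omega_3,1)^{V[G]}$ is $\omega_3$-closed, whence $C_\theta\in V[G]$. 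The real problem is that, although densely many conditions decide $\dot C_\theta$ completely, different conditions may decide it to be different clubs, so \emph{which} element of $V[G]$ the set $C_\theta$ is genuinely depends on the tail of $H$. Moreover $\Add(\omega_3,1)^{V[G]}$ is not a partial order of $\HOD_{\mathcal P_\theta(\mathbb R)}[G]$ (its conditions include arbitrary bounded subsets of $\Theta$), so knowing that the forcing relation for $\dot C_\theta$ is ordinal definable from $B_0$ and $\theta$ does not let the small model evaluate it. Your Fodor/coherence remedy presupposes that $\dot C_\theta$ can be captured by a $\pmax*\Add(\theta,1)^{V[G]}$-name for stationarily many $\theta$, which is essentially the statement to be proved; coherence only determines $C_\theta$ from its own limit points, not from data available below $\theta$. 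This is exactly the difficulty the paper isolates in Section \ref{section:nochoice}: the danger that $\Add(\omega_3,1)$ ``accidentally'' adds a square sequence, which is why the $\pmax$-only argument of Theorem \ref{theorem:gone} does not simply relativize.

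The paper's proof reaches your contradiction by a different mechanism that you would need to adopt or replace. Using $\V=L(\mathcal P(\mathbb R))$ and a Skolem-hull argument, one finds $\theta$ in the stationary set together with an elementary embedding $j\colon L_{\xi_0}(\mathcal P_\theta(\mathbb R))\to L_{\xi_1}(\mathcal P(\mathbb R))$ with critical point $\theta$ sending a local name $\sigma$ to $\tau$. After forcing with $\pmax$ one shows that for any $H$ generic over $M_1[G]$ for the \emph{small} forcing $\Add(\theta,1)^{M_0[G]}$, the sequence $\sigma_{G*H}$ is threaded in $M_1[G][H]$: otherwise one specializes the tree of attempts to thread it along a club in $\theta$ of order type $\omega_1$, and uses $\PFA(\mathfrak c)$ in $M_1[G]$ (together with the $<\theta$-closure of $\Add(\theta,1)^{M_0[G]}$, which compresses $\theta$-many dense sets into $\omega_1$-many) to build inside $M_1[G]$ an $M_0[G]$-generic filter $H$ together with a specializing function; lifting $j$ to $M_0[G][H]\to M_1[G][H_1]$ then exhibits the thread $(\tau_{G*H_1})_\theta$ --- this is where $\cf(\theta)=\omega_1$ and the hypothesis that the partial sequence is defined at all such points are used --- contradicting the specialization. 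The unique thread then lies in $\HOD_{\mathcal P_\theta(\mathbb R)}[G][H]$ and collapses $\theta$ there, which is the contradiction you were aiming for. In short: the reflection embedding and the $\PFA(\mathfrak c)$/specialization machinery are what substitute for the direct localization you could not carry out.
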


\begin{proof}
Let $\kappa$ be a regular cardinal which is not the surjective image of $\mathcal{P}(\mathbb{R})$, and let 
$T$ be the theory of $\mathrm{L}_{\kappa}(\mathcal{P}(\mathbb{R}))$. 
Suppose that $\tau$ is a $\pmax * \Add(\omega_{3},1)$-name in $\mathrm{L}(\mathcal{P}(\mathbb{R}))$
whose realization is forced by some condition $p_{0}$
to be such a partial $\square_{\omega_2}$-sequence. We may assume that $\tau$ is coded by a subset
of $\mathcal{P}(\mathbb{R})$, and (by using the least ordinal parameter
defining a counterexample to the theorem) that $(\tau, p_{0})$ is definable in $\mathrm{L}_{\kappa}(\mathcal{P}(\mathbb{R}))$ from some $A \subseteq
\mathbb{R}$. Using our hypothesis (and Theorem \ref{regpres} for item (\ref{regitem})), we
get $\theta < \Theta$ with $A$ in $\mathcal{P}_{\theta}(\mathbb{R})$, and ordinals $\xi_{0}$ and
$\xi_{1}$ such that
\begin{enumerate}
\item
$\theta < \xi_{0} < \Theta < \xi_{1} \leq \Theta^{+}$;
\item
$\mathrm{L}_{\xi_{1}}(\mathcal{P}(\mathbb{R}))$ satisfies $T$;
\item \label{goodcof}
$\theta$ is a limit element of the Solovay sequence of $\mathrm{L}(\mathcal{P}(\mathbb{R}))$ of cofinality
$\omega_{1}$;
\item \label{regitem}
$\theta$ is regular in $\HOD_{\mathcal{P}_{\theta}(\mathbb{R})}$;
\item
$p_{0} \in \mathrm{L}_{\xi_{0}}(\mathcal{P}_{\theta}(\mathbb{R}))$;
\item\label{definablecondition} \label{evel}
every element of $\mathrm{L}_{\xi_{0}}(\mathcal{P}_{\theta}(\mathbb{R}))$ is definable in
$\mathrm{L}_{\xi_{0}}(\mathcal{P}_{\theta}(\mathbb{R}))$ from a set of reals in
$\mathcal{P}_{\theta}(\mathbb{R})$;
\item
in $\mathrm{L}_{\xi_{1}}(\mathcal{P}(\mathbb{R}))$, $\tau$ is a $\pmax * \Add(\omega_{3},1)$-name
whose realization is forced by $p_{0}$ to be a partial $\square_{\omega_2}$-sequence defined  on the
ordinals of cofinality at most $\omega_1$; and
\item
there exist $\sigma \in \mathrm{L}_{\xi_{0}}(\mathcal{P}_{\theta}(\mathbb{R}))$ and an elementary
embedding
 $$ j\colon \mathrm{L}_{\xi_{0}}(\mathcal{P}_{\theta}(\mathbb{R})) \to \mathrm{L}_{\xi_{1}}(\mathcal{P}(\mathbb{R})) $$
with critical point $\theta$ such that $j(\sigma) = \tau$.
\end{enumerate}

To see this, let $\xi_{1}$ be the least ordinal $\xi > \Theta$ such that $\mathrm{L}_{\xi}(\mathcal{P}(\mathbb{R}))
\models T$, and $\tau$ is definable from $A$ in $\mathrm{L}_{\xi}(\mathcal{P}(\mathbb{R}))$. Then every element
of $\mathrm{L}_{\xi_{1}}(\mathcal{P}(\mathbb{R}))$ is definable in $\mathrm{L}_{\xi_{1}}(\mathcal{P}(\mathbb{R}))$ from a
set of reals. For each $\alpha < \Theta$, let $X_{\alpha}$ be the set of elements of $\mathrm{L}_{\xi_{1}}
(\mathcal{P}(\mathbb{R}))$ definable from a set of reals of Wadge rank less than $\alpha$.
Then, by the definition of the Solovay sequence, and Remark \ref{Solovayremark}, the order type of
$X_{\alpha}\cap\Theta$ is always less than the least element of the Solovay sequence above $\alpha$.
Since $\Theta$ is regular, $X_{\alpha}\cap\Theta$ is bounded below $\Theta$. Let $f(\alpha) =
\sup(X_{\alpha} \cap \Theta)$. Then $f$ is continuous, and we can find $\theta$ satisfying items
(\ref{goodcof}) and (\ref{regitem}) above, and such that $f(\theta) = \theta$. Let $\xi_{0}$ be the
order type of $X_{\theta} \cap \xi_{1}$, so that $\mathrm{L}_{\xi_{0}}(\mathcal{P}_{\theta}(\mathbb{R}))$ is the
transitive collapse of $X_{\theta}$, the embedding $j$ is simply the inverse of the collapse, and $\sigma$ is the collapse of $\tau$.

Let $M_{0} = \mathrm{L}_{\xi_{0}}(\mathcal{P}_{\theta}(\mathbb{R}))$ and $M_{1} = \mathrm{L}_{\xi_{1}}(\mathcal{P}
(\mathbb{R}))$. Let $G$ be $\pmax$-generic over $M_{1}$, containing the first coordinate of $p_{0}$.
Then $j$ lifts to
 $$ j\colon M_0[G]\to M_{1}[G], $$
and $j(\Add(\theta,1)^{M_0[G]}) = \Add(\Theta,1)^{M_{1}[G]}$. Since $\pmax$ is countably closed,
$\theta$ has cofinality $\omega_{1}$ in $M_{1}[G]$. It follows that each countable subset of
$\Add(\theta,1)^{M_0[G]}$ in $M_{1}[G]$ is an element of $\mathrm{L}(A,\mathbb{R})[G]$ for some
$A \in \mathcal{P}(\mathbb{R})^{M_{0}}$, and therefore that
$\Add(\theta,1)^{M_0[G]}$ is $\omega$-closed in $M_{1}[G]$.
Let $z$ be the second coordinate of $p_{0}$, as realized by $G$. 

\begin{claim}\label{claim6.2}
If $H$ is $M_{1}[G]$-generic over $\Add(\theta,1)^{M_0[G]}$, with $z \in H$, then
$\sigma_{G*H}$ has a thread in $M_{1}[G][H]$.
\end{claim}

We first finish the proof of the theorem, assuming the existence of an $H$ as in the claim. Since $\theta$ has uncountable cofinality in $M_{1}[G][H]$, there can be at most one thread through
$\sigma_{G*H}$ in $M_{1}[G][H]$. The thread, being unique, would be in
$\HOD_{\mathcal{P}_{\theta}(\mathbb{R})}[G][H]$ (note that $\HOD_{\mathcal{P}_{\theta}(\mathbb{R})}$
has the same sets of reals as $M_{0}$).
This leads to a contradiction, as
$\theta$ would be collapsed in $\HOD_{\mathcal{P}_{\theta}(\mathbb{R})}[G][H]$, which is impossible
since $\theta$ is regular in $\HOD_{\mathcal{P}_{\theta}(\mathbb{R})}$.

It suffices then to prove the claim.

\begin{proof}[Proof of Claim \ref{claim6.2}.]
Toward a contradiction, suppose that the claim were false. 
Let $C\in M_1$ be club in $\theta$, with ordertype $\omega_1$. By the Coding
Lemma, $C$ is in $\mathrm{L}(B,\mathbb{R})$ for some set of reals $B$ with $|B|_W=\theta$. If $H$ is
$M_{1}[G]$-generic as above, with $z \in H$, then $\sigma_{G*H}$ is a coherent sequence of length $\theta$ with no
thread in $M_{1}[G][H]$. We can fix $(\pmax * \Add(\theta,1)^{M_0[G]})$-names $\rho$ and $\psi$ in
$\mathrm{L}(B, \mathbb{R})$ such that
\begin{itemize}
\item
$\rho_{G*H}$ is the tree of attempts to build a thread through $\sigma_{G*H}$ along $C$ (i.e., the
relation consisting of those pairs $(\alpha, \beta)$ from $C$ for which the $\beta$-th member of
$\sigma_{G*H}$ extends the $\alpha$-th member), and
\item
$\psi_{G*H}$ is the poset that specializes $\rho_{G*H}$ (i.e., which consists of finite partial functions
mapping $C$ to $\omega$ in such a way that $\rho_{G*H}$-compatible elements of $C$ are mapped to
distinct elements of $\omega$, ordered by inclusion).
\end{itemize}
Since $\sigma_{G*H}$ is forced by $p_{0}$ to have no thread in $M_{1}[G][H]$, $\Add(\theta,1)^{M_0[G]} * \psi_{G*H}$ is
$\omega$-closed$*$c.c.c., and thus proper, in $M_{1}[G]$.

\begin{subclaim}
In $M_1[G]$, there are $H,f$ such that
\begin{itemize}
\item $H$ is $\Add(\theta,1)^{M_0[G]}$-generic over $M_0[G]$, with $z \in H$, and
\item  $f$ specializes $\rho_{G*H}$.
\end{itemize}
\end{subclaim}

\begin{proof}
In $M_0[G]$, $\Add(\theta,1)$ is $< \theta$-closed.

For each $\alpha \in C$ and each ternary formula $\phi$, (recalling condition (\ref{definablecondition}) from the choice
of $\xi_{0}$ and $\theta$) let $E_{\alpha,\phi}$ be the collection of sets
of the form $\{ x \mid M_{0}[G] \models \phi(A, G, x)\}$ which are dense subsets of
$\Add(\theta,1)^{M_{0}[G]}$, where $A$ is an element of $\mathcal{P}_{\alpha}(\mathbb{R})$.
As $\mathcal{P}_{\alpha}(\mathbb{R})$ is a surjective image of $\mathbb{R}$ in $M_{0}$, $E_{\alpha,\phi}$ has cardinality less than $\theta$ in $M_{0}[G]$.
It follows that there is a dense subset
$D_{\alpha,\phi}$ of $\Add(\theta,1)^{M_{0}[G]}$ refining all the members of $E_{\alpha,\phi}$.

Since $\PFA(\mathfrak{c})$ holds in $M_1[G]$ (by Woodin \cite[Theorem 9.39]{W}), there is a filter
$H * K$ on $\Add(\theta,1)^{M_0[G]} * \psi$ in $M_{1}[G]$ such that $H$ meets each $D_{\alpha,\phi}$, and
$H*K$ meets the dense sets guaranteeing that $K$ determines a specializing function $f$ for
$\rho_{G*H}$. This gives the subclaim.
\end{proof}

Let $H$ and $f$ be as in the subclaim. Then in $M_1[G]$, $H$ is a condition in $\Add(\Theta,1)$. We
can therefore find a generic $H_1$ over $M_1[G]$ such that $j$ lifts to
 $$ j\colon M_0[G][H]\to M_1[G][H_1]. $$

But then $\sigma_{G*H}$ has the thread $(\tau_{G*H_1})_\theta$. But $f$ is in $M_1[G]$, so
$\omega_1$ was collapsed by going to $M_1[G][H_1]$, giving a contradiction.
\end{proof}

This completes the proof of Theorem \ref{theorem:sqcofoo}.
\end{proof}

\begin{definition}\label{squaresubkappalambda}
Given cardinals $\kappa$ and $\lambda$, the principle $\square_{\kappa, \lambda}$ asserts the existence of a sequence $\langle \mathcal{C}_{\alpha} \mid \alpha < \kappa^{+} 
\rangle$ of nonempty sets such that for each $\alpha < \kappa^{+}$,
\begin{enumerate}
\item
$|\mathcal{C}_{\alpha}| \leq \lambda$;
\item
each element of $\mathcal{C}_{\alpha}$ is club in $\alpha$, and has order type at
most $\kappa$;
\item\label{closurecondition}
for each member $C$ of $\mathcal{C}_{\alpha}$, and each limit point $\beta$ of $C$,
$C \cap \beta \in\mathcal{C}_{\beta}$.
\end{enumerate}
\end{definition}

We call a sequence witnessing the above principle a $\square_{\kappa,\lambda}$-sequence.
The statement $\square_{\omega_{2}, \omega_{2}}$ follows from $2^{\aleph_{1}} = \aleph_{2}$ \cite{J}, which holds in 
$\pmax$ extensions. 

\begin{question}
Can one improve Theorems \ref{theorem:sqcofoo} to obtain the failure of
$\square_{\omega_{2}, \omega}$?
\end{question}

\begin{remark}\label{remark:sqcofoo}
In terms of consistency strength, the hypothesis of Theorem \ref{theorem:sqcofoo} is below a Woodin limit of
Woodin cardinals; this follows from the proof of Sargsyan \cite[Theorem, 3.7.3]{S} (see also \cite{S15a, S15b}). Furthermore, the
hypothesis is equiconsistent with a determinacy statement that is easier to state. In fact, something
stronger than mere equiconsistency holds, as we proceed to sketch.
\end{remark}

\begin{theorem} Assume $\AD^{+} + \Unif$ +$\V=\mathrm{L}({\mathcal P}(\R))$, and that the $\HOD$ analysis applies. Then the following are equivalent.
\begin{enumerate}
\item $\HOD\models ``\Theta$ is Mahlo to measurables".
\item $\Theta$ is Mahlo to measurables of $\HOD$.
\item There are stationarily many $\theta$ in the Solovay sequence that have cofinality $\omega_1$ and are regular in
$\HOD$.
\end{enumerate}
\end{theorem}
\begin{proof}
That clause 2 implies clause 1 is immediate. Assume now that clause 1 holds. 
Let 
\[S=\{ \theta_\alpha<\Theta: \HOD\models ``\theta_\alpha \text{ is measurable''}\}.
\] We have that $\HOD\models ``S$ is stationary". It follows from Theorem \ref{regpres} that $S$ is stationary in $\V$. Hence, clause 1 implies clause 2.

Next suppose $\Theta$ is Mahlo to measurables of $\HOD$. Let $S$ be as above. It follows from the $\HOD$ analysis (in particular, see \cite[Lemma 8.25]{Steel}) that every member 
of $S$ has cofinality $\omega_1$. This shows that clause 2 implies clause 3.

Finally, assume that there are stationarily many $\theta$ in the Solovay sequence that have cofinality $\omega_1$ and are regular in
$\HOD$. Let 
\[S=\{ \theta_\alpha<\Theta: \cf(\theta_\alpha)=\omega_1 \text{ and }\HOD\models ``\theta_\alpha\text{ is regular"}\}.
\] 
It follows from the $\HOD$ analysis that each $\theta_\alpha\in S$ is measurable in $\HOD$. To see this, fix $\theta_\alpha\in S$ and let $(\mathcal{Q}, \Lambda)\in \mathcal{F}$ be 
such that $\theta_\alpha\in \mathcal{M}_\infty(\mathcal{Q}, \Lambda)$ and such that for some $\kappa\in \mathcal{Q}$, $\pi^{\Lambda}_{\mathcal{Q}, \infty}(\kappa)=\theta_\alpha$. 
Since $\theta_\alpha$ is regular in $\HOD$ we have that $\kappa$ is regular in $\mathcal{Q}$. If $\kappa$ is not measurable in $\mathcal{Q}$ then $\pi^\Lambda_{\mathcal{Q}, \infty}
[\kappa]$ is cofinal in $\theta_\alpha$ implying that $\theta_\alpha$ has countable cofinality. Therefore, we must have that $\kappa$ is measurable in $\mathcal{Q}$ implying that 
$\HOD\models ``\theta_\alpha$ is measurable". This shows that clause 3 implies clause 1.
\end{proof}

\section{Stronger hypotheses and the threadability of $\omega_3$} \label{section:stronger}

In this section we apply a hypothesis stronger than the one used in Theorem
\ref{theorem:sqcofoo} to obtain the failure of a weakening of $\square(\omega_{3})$. 

\begin{definition}
Given an ordinal $\gamma$ and a cardinal $\lambda$, the principle $\square(\gamma, \lambda)$ asserts
the existence of a sequence $\langle \mathcal{C}_{\alpha} \mid \alpha < \gamma \rangle$ satisfying the following conditions. 
\begin{enumerate}
\item
For each $\alpha < \gamma$,
\begin{itemize}
\item
$0 < |\mathcal{C}_{\alpha}| \leq \lambda$;
\item
each element of $\mathcal{C}_{\alpha}$ is club in $\alpha$;
\item
for each member $C$ of $\mathcal{C}_{\alpha}$, and each limit point $\beta$ of $C$, $C \cap
\beta \in\mathcal{C}_{\beta}$.
\end{itemize}
\item
There is no thread through the sequence, that is, there is no club $E \subseteq
\gamma$ such that $E \cap \alpha \in \mathcal{C}_{\alpha}$ for every limit point $\alpha$ of $E$.
\end{enumerate}
\end{definition}

Again, we refer to sequences witnessing the above principle as $\square(\gamma,
\lambda)$-sequences. Notice that $\square_{\kappa, \lambda}$ implies $\square(\kappa^{+},
\lambda)$.
The arguments of Todorcevic \cite{T1, T2} mentioned above show that $\MM(\mathfrak{c})$ implies the failure of
$\square(\gamma, \omega_{1})$ for any ordinal $\gamma$ of cofinality at least $\omega_{2}$.

Before stating our result, we recall a result of Woodin that will be useful in what follows. The
notion of {\em $A$-iterability} for $A$ a set of reals, crucial in the theory of $\Pmax$, is introduced in
Woodin \cite[Definition 3.30]{W}. For $X\prec H(\omega_2)$, Woodin denotes by $M_X$ its transitive
collapse. Woodin \cite[\S 3.1]{W} presents a series of covering theorems for $\Pmax$ extensions. In
particular, we have:

\begin{theorem}[Woodin {\cite[Theorem 3.45]{W}}] \label{thm:woodin}
Suppose that $M$ is a proper class inner model that contains all the reals and satisfies $\AD+\DC$.
Suppose that for any $A\in{\mathcal P}(\R)\cap M$, the set
$$ \{X\prec H(\omega_2)\mid \mbox{\rm$X$ is countable, and $M_X$ is $A$-iterable}\} $$
is stationary. Let $X$ in $\V$ be a bounded subset of $\Theta^M$ of size $\omega_1$. Then there is a set
$Y\in M$, of size $\omega_1$ in $M$, and such that $X\subseteq Y$.
\end{theorem}

Typically, we apply this result as follows: We start with $M=\mathrm{L}({\mathcal P}(\R))$, a model of $\AD^++
\DC$, and force with $\Pmax$ to produce an extension $M[G]$. The technical stationarity assumption is
then true in $M[G]$ by virtue of Woodin \cite[Theorem 9.32]{W}, and we can then apply Theorem
\ref{thm:woodin} in this setting.

The following lemma will be used in the proof of Theorem \ref{theorem:vanilla}.
A similar fact was used in the proof of Theorem \ref{theorem:sqcofoo}. 

\begin{lemma} \label{lemma:closurelemma}
Suppose that $M_{0} \subseteq M_{1}$ are models of $\ZF + \AD^{+}$ such that
\begin{itemize}
\item $\mathbb{R} \subseteq M_{0}$;
\item $\mathcal{P}(\mathbb{R})^{M_{0}}$ is a proper subset of $\mathcal{P}(\mathbb{R})^{M_{1}}$;
\item
 $M_{1} \models \mbox{\rm``$\Theta$ is regular"}$;
\item
 $\Theta^{M_{0}}$ has cofinality at least $\omega_{2}$ in $M_{1}$.
\end{itemize}
Let $G \subset \pmax$ be an $M_{1}$-generic filter. Then $\Add(\omega_{3},1)^{M_{0}[G]}$ is closed
under $\omega_{1}$-sequences in $M_{1}[G]$.
\end{lemma}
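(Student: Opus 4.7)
The plan is to verify $\omega_1$-closure of $\mathbb{P} := \Add(\omega_3,1)^{M_0[G]}$ by showing that any descending $\omega_1$-sequence $\vec p = \langle p_\xi : \xi < \omega_1\rangle$ in $\mathbb P$ lying in $M_1[G]$ has a lower bound in $\mathbb P$. The natural candidate is the pointwise union $p := \bigcup_\xi p_\xi$, so the task reduces to verifying that $p$ is itself a condition in $\mathbb P$: a bounded partial function $\Theta^{M_0} \to 2$ lying in $M_0[G]$.

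First I would check that $\mathrm{dom}(p)$ is bounded below $\Theta^{M_0} = \omega_3^{M_0[G]}$. Since $\pmax$ is $\sigma$-closed, it preserves cofinalities of at least $\omega_1$. By hypothesis $\mathrm{cf}^{M_1}(\Theta^{M_0}) \geq \omega_2$, so $\mathrm{cf}^{M_1[G]}(\Theta^{M_0}) > \omega_1$, and thus the $\omega_1$-sequence $\langle \sup(\mathrm{dom}(p_\xi)) : \xi < \omega_1\rangle$ lying in $M_1[G]$ is bounded.

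Next I would invoke Woodin's covering theorem (Theorem~\ref{thm:woodin}) inside $M_1[G]$ with inner model $M_0$, to cover $\mathrm{dom}(p)$ by some $Y \in M_0$ of $M_0$-cardinality $\omega_1$. The required stationarity hypothesis on $A$-iterable hulls of $H(\omega_2)^{M_1[G]}$ holds for every $A \in \mathcal{P}(\R)^{M_1} \supseteq \mathcal{P}(\R)^{M_0}$, via Woodin \cite[Theorem 9.32]{W} applied to the $\pmax$-extension of the $\AD^+$-model $M_1$. Fixing an enumeration of $Y$ in $M_0$ of order type $\omega_1$, the condition $p$ becomes encoded by a subset of $\omega_1$ lying in $M_1[G]$, and the task reduces to showing this coding subset lies in $M_0[G]$.

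The main obstacle is this last step. Here I would rely on the $\AD^+$-theoretic fact that $\mathcal{P}(\omega_1) \subseteq L(\R)$ in any $\AD^+$ model (since subsets of $\omega_1$ are $\Sigma^2_1$-definable and hence absolute between $\AD^+$ inner models sharing the same reals), combined with the preservation of this absoluteness through $\pmax$-forcing: any $\pmax$-name in $M_1$ for a subset of $\omega_1$ should be equivalent, in realization, to a $\pmax$-name already in $L(\R)$, yielding $\mathcal{P}(\omega_1)^{M_1[G]} \subseteq L(\R)[G] \subseteq M_0[G]$. Granting this, the coding subset lies in $M_0[G]$, hence so does $p$, completing the verification that $p$ is a lower bound in $\mathbb P$.
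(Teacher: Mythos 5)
Your overall architecture (bound the domains using the cofinality hypothesis, pull the new information into $M_0$ via Woodin's covering theorem, finish with $\mathcal{P}(\omega_1)^{M_1[G]}\subseteq L(\mathbb{R})[G]\subseteq M_0[G]$) is the right one, and the last ingredient is legitimate: it is Woodin's Theorem 9.23, which the paper itself invokes in the proof of Theorem \ref{theorem:warmup}. But the central step of your argument fails on cardinality grounds. A condition in $\Add(\omega_3,1)^{M_0[G]}$ is an initial-segment function $p_\xi\colon\alpha_\xi\to 2$ with $\alpha_\xi<\Theta^{M_0}=\omega_3^{M_0[G]}$, so already a single condition can have domain of cardinality $\aleph_2$; a fortiori the union $p$ of your chain has $|\dom(p)|=\aleph_2$ in general. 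Theorem \ref{thm:woodin} only covers bounded subsets of $\Theta^M$ of size $\omega_1$, so there is no $Y\in M_0$ of $M_0$-cardinality $\aleph_1$ covering $\dom(p)$, and $p$ cannot be coded by a subset of $\omega_1$. The object that \emph{is} of size $\aleph_1$ is not $p$ but the sequence of ``pointers'' to its approximations: since each $p_\xi$ individually lies in $M_0[G]$, it is the realization of a $\pmax$-name in $M_0$, and (using $\omega_2$-$\DC$ in $M_1[G]$, a pre-well-ordering of length $\gamma$, and the fact that the Wadge ranks of the coding sets of reals are bounded below $\Theta^{M_0}$) the whole sequence $\langle p_\xi\mid\xi<\omega_1\rangle$ reduces to a single set of reals in $M_0$ together with an $\omega_1$-sequence of reals in $M_1[G]$. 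It is to that $\omega_1$-sequence that covering and the $\mathcal{P}(\omega_1)\subseteq L(\mathbb{R})[G]$ fact get applied; this is exactly how the paper's proof goes, and it yields the stronger conclusion (which the applications to $<\omega_2$-directed closure in $M_1[G]$ actually use) that the sequence itself, not merely a lower bound, belongs to $M_0[G]$.

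A second, smaller but genuine, error: you deduce $\cf^{M_1[G]}(\Theta^{M_0})>\omega_1$ from the $\sigma$-closure of $\pmax$. Countably closed forcing preserves uncountable cofinality but can perfectly well turn cofinality $\omega_2$ into cofinality $\omega_1$ (consider $\Col(\omega_1,\omega_2)$), so this does not follow. The paper obtains $\cf^{M_1[G]}(\Theta^{M_0})=\omega_2$ by another application of Theorem \ref{thm:woodin}: a cofinal $\omega_1$-sequence in $M_1[G]$ would be covered by a set lying in $M_1$ and of size $\aleph_1$ there, whose supremum is below $\Theta^{M_0}$ because $\cf^{M_1}(\Theta^{M_0})\geq\omega_2$. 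Finally, note that the covering theorem as stated requires the inner model to satisfy $\AD+\DC$, which $\AD^+$ alone does not guarantee for $M_0$; the paper sidesteps this by applying it to $L(A,\mathbb{R})$ for a set of reals $A$ of sufficiently large Wadge rank, and you should do the same.
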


\begin{proof}
Since $M_{0} \models \AD^{+}$ and $G$ is an $M_{0}$-generic filter for $\pmax$,  $\omega_{3}^{M_{0}
[G]} = \Theta^{M_{0}}$.
We show first that $\cof(\Theta^{M_{0}}) = \omega_{2}$ in $M_{1}[G]$. Since we have assumed that
$\cof(\Theta^{M_{0}}) \geq \omega_{2}$ in
$M_{1}$, equality is a consequence of the Covering Theorem \ref{thm:woodin}, as follows. Given any bounded subset $X$
of $\Theta^{M_{1}}$ of cardinality $\aleph_{1}$ in $M_{1}[G]$, let $A\in M_1$ be a set of reals of Wadge
rank at least $\sup(X)$, and apply Theorem \ref{thm:woodin} with $M$ as $\mathrm{L}(A,\R)$ and $\V$ as $L(A, \R)[G]$.
Since $\mathcal{P}(\omega_{1})^{M_{1}[G]}$ is contained in $\mathrm{L}(\mathbb{R})[G]$
(Woodin \cite[Theorem 9.32]{W}), $X$ will be an element of $\mathrm{L}(A, \mathbb{R})[G]$, and thus a subset of an element 
of $\mathrm{L}(A, \mathbb{R})$ of cardinality $\aleph_{1}$ in $\mathrm{L}(A, \mathbb{R})$. 

Let $\langle p_{\alpha}  \mid \alpha < \omega_{1} \rangle$ be a sequence in $M_{1}[G]$ consisting of
conditions in $\Add(\omega_{3},1)^{M_{0}[G]}$. Since $\cof(\Theta^{M_{0}}) = \omega_{2}$ in
$M_{1}[G]$, we may fix a $\gamma < \Theta^{M_{0}}$ such that each $p_{\alpha}$ is a subset of
$\gamma$. Since $\omega_{2}$-$\DC$ holds in $M_{1}[G]$ by Woodin \cite[Theorem 9.36]{W}, we may
find in $M_{1}[G]$ a sequence $\langle \tau_{\alpha} \mid \alpha < \omega_{1} \rangle$ consisting of
$\pmax$-names in $M_{0}$ such that $\tau_{\alpha,G} = p_{\alpha}$ for all $\alpha < \omega_{1}$.

Via a pre-well-ordering $R$ of length $\gamma$ in $M_{0}$, we may assume that each
$\tau_{\alpha}$ is coded by $R$ and a set of reals $S_{\alpha}$ in $M_{0}$, in such a way that
the sequence $\langle S_{\alpha} \mid \alpha < \omega_{1} \rangle$ is in $M_{1}[G]$. Letting $\eta < \Theta^{M_{0}}$ be
a bound on the Wadge ranks of the sets $S_{\alpha}$, we have that the sequence $\langle
S_{\alpha} \mid \alpha < \omega_{1} \rangle$ is coded by a single set of reals $E$ in $M_{0}$ and an
$\omega_{1}$-sequence of reals in $M_{1}[G]$. Finally, since
$\mathcal{P}(\omega_{1})^{M_{1}[G]} \subseteq \mathrm{L}(\mathbb{R})[G] \subseteq M_{0}[G]$
(as mentioned in the first paragraph of this proof), we have
that $\langle p_{\alpha} \mid \alpha < \omega_{1} \rangle \in M_{0}[G]$.
\end{proof}

The following principle is introduced in Woodin \cite[\S 9.5]{W}. If one assumes that $I$ is the
nonstationary ideal,
then one gets Todorcevic's reflection principle $\SRP(\kappa)$. The principle $\SRP(\omega_{2})$
follows easily from $\MM^{++}(\mathfrak{c})$.

\begin{definition}
Given a cardinal $\kappa \geq \omega_{2}$, $\SRP^{*}(\kappa)$ is the statement that there is a proper,
normal, fine ideal $I \subseteq \mathcal{P}([\kappa]^{\aleph_{0}})$ such that for all stationary
$T \subseteq\omega_{1}$,
$$
\{ X \in [\kappa]^{\aleph_{0}} \mid X \cap \omega_{1} \in T\} \not\in I,
$$
and such that for all $S \subseteq [\kappa]^{\aleph_{0}}$, if $S$ is such that for all stationary $T
\subseteq \omega_{1}$,
$$
\{ X \in S \mid X \cap \omega_{1} \in T\} \not\in I,
$$
then there a set $Y \subseteq \kappa$ such that
\begin{itemize}
\item
 $\omega_{1} \subseteq Y$;
\item
$|Y| = \aleph_{1}$;
\item
 $\cof(\sup(Y)) = \omega_{1}$;
\item
 $S \cap [Y]^{\aleph_{0}}$ contains a club in $[Y]^{\aleph_{0}}$.
\end{itemize}
\end{definition}

The hypothesis of Theorem \ref{theorem:vanilla} is stronger than that of Theorem
\ref{theorem:sqcofoo}, but still below a Woodin limit of Woodin cardinals in terms of consistency strength \cite{SarTra}.
By Woodin \cite[Theorem 9.10]{W}, the hypotheses of the theorem imply that
$\AD^{+}$ holds in $M_{0}$.

\begin{theorem} \label{theorem:vanilla}
Suppose that $M_{0} \subseteq M_{1}$ are models of $\ZF + \AD^{+} + \Unif$ with the same reals such that, letting
$\Gamma_{0} = \mathcal{P}(\mathbb{R}) \cap M_{0}$, the following hold:
\begin{itemize}
\item
 $M_{0} = \HOD^{M_{1}}_{\Gamma_{0}}$;
\item
 $M_{0} \models ``\Theta \text{ is regular}"$;
\item
 $\Theta^{M_{0}} < \Theta^{M_{1}}$;
\item
 $\Theta^{M_{0}}$ has cofinality at least $\omega_{2}$ in $M_{1}$.
\end{itemize}
Let $G \subset \pmax$ be $M_{1}$-generic, and let $H \subset \Add(\omega_{3},1)^{M_{0}[G]}$ be
$M_{1}[G]$-generic.
Then the following hold in $M_{0}[G][H]$:
\begin{itemize}
\item
 $\omega_3$ is threadable; in fact, we have $\neg \square(\omega_{3},\omega)$;
\item
 $\SRP^{*}(\omega_{3})$;
\end{itemize}
\end{theorem}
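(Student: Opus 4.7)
The plan is to follow and extend the strategy of Theorem \ref{theorem:warmup} in two directions. First, the stronger hypothesis $M_0 = \HOD^{M_1}_{\Gamma_0}$ (equality, not merely containment) allows us to reflect any object in $M_1[G][H]$ that is ordinal definable from parameters in $\Gamma_0 \cup \{G,H\}$ back into $M_0[G][H]$; exploiting this, we would upgrade the refutation from $\square_{\omega_2,\omega}$ to the weaker principle $\square(\omega_3,\omega)$ (which in particular gives threadability of $\omega_3$). Second, we would produce the ideal witnessing $\SRP^*(\omega_3)$ by adapting Woodin's $\pmax$ ideal analysis from \cite[\S 9.5]{W} to our two-step iteration.

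For $\neg \square(\omega_3,\omega)$, I would assume for contradiction that $\tau \in M_0$ is a $\pmax * \Add(\omega_3,1)^{M_0[G]}$-name, ordinal definable from some $S \in \Gamma_0$, forced to be a $\square(\omega_3,\omega)$-sequence in $M_0[G][H]$; since $M_0$ satisfies Choice and admits a definable well-ordering, we may assume the name carries a canonical indexing of each $\mathcal C_\alpha$ in order type at most $\omega$. By Lemma \ref{lemma:closurelemma}, $\Add(\omega_3,1)^{M_0[G]}$ is $\omega_2$-closed in $M_1[G]$. Since $M_1[G] \models \MM^{++}(\mathfrak c)$ by Woodin \cite{W}, and $\MM^{++}(\mathfrak c)$ is preserved by $<\omega_2$-directed closed forcing of size at most $\mathfrak c$ by Larson \cite{L}, $\MM^{++}(\mathfrak c)$ would hold in $M_1[G][H]$. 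In that model $\Theta^{M_0}$ has cardinality and cofinality $\omega_2$, so by Todorcevic \cite{T1,T2} the realization $\tau_{G*H}$ must be threaded there. A counting argument as in Theorem \ref{theorem:warmup} (uncountably many pairwise distinct threads would, on a common club of limit points, yield more than $\omega$ distinct members in some $\mathcal C_\alpha$) shows that only countably many threads exist. Picking the least pair $(\alpha,n)$ such that the $n$-th element of $\mathcal C_\alpha$ is extended by exactly one thread would yield a thread ordinal definable from $S$, $G$, and $H$ in $M_1[G][H]$; the corresponding name lies in $M_0$ by $M_0 = \HOD^{M_1}_{\Gamma_0}$, so the thread lies in $M_0[G][H]$, directly contradicting the assumption that $\tau_{G*H}$ is a $\square(\omega_3,\omega)$-sequence there (which by definition has no thread).

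For $\SRP^*(\omega_3)$, I would define an ideal $I$ in $M_0[G][H]$ via an $M_0$-name, ordinal definable from some $S \in \Gamma_0$, arranged so that $I$-positivity corresponds, after lifting to $M_1[G][H]$, to the stationarity notion used in Woodin's $\pmax$ ideal analysis. Normality, fineness, and properness would be verified directly from the definition. For the reflection clause, given $S \subseteq [\omega_3]^{\aleph_0}$ in $M_0[G][H]$ whose $T$-slices are $I$-positive for every stationary $T \subseteq \omega_1$, I would lift $S$ into $M_1[G][H]$ and there combine $\MM^{++}(\mathfrak c)$ with Lemma \ref{lemma:closurelemma} and the covering Theorem \ref{thm:woodin} to produce a $Y \subseteq \Theta^{M_0}$ of cardinality $\aleph_1$ with $\cof(\sup(Y)) = \omega_1$ and $S \cap [Y]^{\aleph_0}$ club in $[Y]^{\aleph_0}$. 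Finally, $M_0 = \HOD^{M_1}_{\Gamma_0}$ together with the $\omega_2$-closure of $\Add(\omega_3,1)^{M_0[G]}$ in $M_1[G]$ would be used to show that some such $Y$ can already be found in $M_0[G][H]$ itself.

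The main obstacle will be the $\SRP^*(\omega_3)$ conclusion: pinning down the correct definition of the ideal $I$ inside $M_0[G][H]$, and then verifying that the reflection witness $Y$ obtained in $M_1[G][H]$ descends to $M_0[G][H]$ while retaining $\cof(\sup(Y)) = \omega_1$. This requires careful navigation between the $\omega_1$-closure properties of $\pmax$, the $\omega_2$-closure of $\Add(\omega_3,1)^{M_0[G]}$ over $M_1[G]$, and the partial failure of Choice in $M_0[G][H]$, so that the definability of $I$ from $\Gamma_0$ and the cofinality demand on $Y$ do not conflict with each other.
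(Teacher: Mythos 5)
Your argument for $\neg\square(\omega_3,\omega)$ is essentially the paper's: run the Theorem \ref{theorem:warmup} machinery ($\MM^{++}(\mathfrak c)$ in $M_1[G]$, preservation under the $\omega_2$-closed forcing $\Add(\omega_3,1)^{M_0[G]}$ via Lemma \ref{lemma:closurelemma}, Todorcevic's threading, the counting of threads, and a definable choice of thread), and then use $M_0=\HOD^{M_1}_{\Gamma_0}$ to pull the thread's name into $M_0$, so that the realization of $\tau$ is threaded in $M_0[G][H]$ itself. That part is fine (and your ``at most countably many threads, one of them definable'' formulation is the careful version of the paper's ``unique thread'' claim).

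The $\SRP^*(\omega_3)$ part has a genuine gap: you never define the ideal $I$, and you explicitly defer this as ``the main obstacle.'' But the definition \emph{is} the proof here, and it is much simpler than the adaptation of Woodin's ideal analysis you propose. Take $I$ to be the nonstationary ideal on $[\Theta^{M_0}]^{\aleph_0}$ \emph{as computed in $M_1[G][H]$}, restricted to $M_0[G][H]$; this trace is definable in $M_1[G][H]$ from $G$, $H$ and parameters in $\Gamma_0$, hence lies in $M_0[G][H]$ by the hypothesis $M_0=\HOD^{M_1}_{\Gamma_0}$. The reflection clause then requires no new work: since $\Theta^{M_0}<\Theta^{M_1}$ and $\mathfrak c=\aleph_2$ in $M_1[G]$, we have $|\Theta^{M_0}|^{M_1[G][H]}=\aleph_2$, so $\SRP(\omega_2)$ in $M_1[G][H]$ (a consequence of $\MM^{++}(\mathfrak c)$ there) applied to $[\Theta^{M_0}]^{\aleph_0}$ directly produces the reflecting set $Y$, and the agreement of $\omega_1$-sequences between the two models (Lemma \ref{lemma:closurelemma} and Theorem \ref{thm:woodin}) transfers the witness to $M_0[G][H]$. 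Note that your proposal never invokes the hypothesis $\Theta^{M_0}<\Theta^{M_1}$, which is exactly what distinguishes this theorem from Theorem \ref{theorem:warmup} and is what makes $\Theta^{M_0}$ have cardinality $\aleph_2$ in $M_1[G][H]$; without using it, no appeal to $\SRP(\omega_2)$ (or to any reflection principle at $\omega_2$ of the big model) can get off the ground, and your plan of ``arranging $I$-positivity to correspond to a stationarity notion'' has no concrete content to verify normality, fineness, properness, or reflection against.
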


\begin{proof}
Suppose that $\tau$ is a $\pmax * \Add(\omega_{3},1)$-name in $M_{0}$ for a
$\square(\omega_{3},\omega)$-sequence. We may assume that the realization of $\tau$ comes with
an indexing of each member of the sequence in order type at most $\omega$. In $M_{0}$, $\tau$ is
ordinal definable from some set of reals
$S$.

By Woodin \cite[Theorem 9.39]{W}, $\MM^{++}(\mathfrak{c})$ holds in $M_{1}[G]$.
As in the first paragraph of the proof of Lemma \ref{lemma:closurelemma}, 
$\Theta^{M_{0}}$ has cofinality $\omega_{2}$ in $M_{1}[G]$.
Forcing with $({<}\,\omega_{2})$-directed closed partial orders
of size at most $\mathfrak{c}$ preserves $\MM^{++}(\mathfrak{c})$ (see Larson \cite{L}). It follows then
from Lemma \ref{lemma:closurelemma}
that $\MM^{++} (\mathfrak{c})$ holds in the $\Add(\omega_{3},1)^{M_{0}[G]}$-extension of $M_{1}[G]$,
and thus that in this extension every candidate for a $\square(\Theta^{M_{0}}, \omega)$-sequence is
threaded.

Let $\mathcal{C} = \langle \mathcal{C}_{\alpha} : \alpha < \Theta^{M_{0}} \rangle$ be the realization of
$\tau$ in the $\Add(\omega_{3},1)^{M_{0}[G]}$-extension of $M_{1}[G]$. Since $\Theta^{M_{0}}$ has cofinality at least $\omega_{2}$ in this
extension (which satisfies Choice), $\mathcal{C}$ has at most $\omega$ many
threads, since otherwise one could find a $\mathcal{C}_{\alpha}$ in the sequence with uncountably
many members. Therefore, some member of some $\mathcal{C}_{\alpha}$ in the realization of $\tau$
will be extended by a unique thread through the sequence, and since the realization of $\tau$ indexes
each $\mathcal{C}_{\alpha}$ in order type at most $\omega$, there is in $M_{1}$ a name, ordinal
definable from $S$, for a thread through the realization of $\tau$. This name is then a member of
$M_{0} = \HOD^{M_{1}}_{\Gamma_{0}}$.

That $\SRP^{*}(\omega_{3})$ holds in $M_{0}[G][H]$ follows from the fact that the nonstationary ideal
on $[\Theta^{M_{0}}]^{\aleph_{0}}$ as defined in
$M_{1}[G][H]$ is an element of $M_{0}[G][H]$, and the facts that $|\Theta^{M_{0}}|^{M_{1}[G][H]} =
\aleph_{2}$, and $M_{1}[G][H]$ satisfies $\SRP(\omega_{2})$.
\end{proof}

\begin{question}
Can one improve Theorem \ref{theorem:vanilla} to obtain the failure of
$\square(\omega_{3}, \omega_{1})$?
\end{question}


\end{document}